\crefname{hypothesis}{Hypothesis}{Hypotheses}
\crefname{fact}{Fact}{Facts}
\title{A Relaxed Randomized Averaging Block Extended Bregman-Kaczmarz Method for Combined Optimization Problems\thanks{
\funding{This work was supported by National Natural Science Foundation of China (Grant No. 12471357).}}}
\author{Zeyu Dong\thanks{Shanghai Research Institute for Intelligent Autonomous Systems, Tongji University,
Shanghai 200092, China
  (\email{dongzeyu@tongji.edu.cn}).}
\and Aqin Xiao\thanks{School of Mathematical Sciences, Tongji University, Shanghai 200092, China
  (\email{xiaoaqin@tongji.edu.cn}).}
\and Guojian Yin\thanks{The Institute for Advanced Study, Shenzhen University, Shenzhen, Guangdong 518001, China
  (\email{yin@szu.edu.cn}).}
\and Junfeng Yin\thanks{Key Laboratory of Intelligent Computing and Applications (Ministry of Education), 
School of Mathematical Sciences, Tongji University, 
Shanghai 200092, China
  (\email{yinjf@tongji.edu.cn}).}}
\begin{document}

\maketitle

\begin{abstract}
Randomized Kaczmarz-type methods are widely used for their simplicity and efficiency in solving large-scale linear systems and optimization problems. However, their applicability is limited when dealing with inconsistent systems or incorporating structural information such as sparsity. In this work, we propose a \emph{relaxed randomized averaging block extended Bregman-Kaczmarz} (rRABEBK) method for solving a broad class of combined optimization problems. The proposed method integrates an averaging block strategy with two relaxation parameters to accelerate convergence and enhance numerical stability. We establish a rigorous convergence theory showing that rRABEBK achieves linear convergence in expectation, with explicit constants that quantify the effect of the relaxation mechanism, and a provably faster rate than the classical randomized extended Bregman-Kaczmarz method. Our method can be readily adapted to sparse least-squares problems and extended to both consistent and inconsistent systems without modification. Complementary numerical experiments corroborate the theoretical findings and  demonstrate that rRABEBK significantly outperforms the existing Kaczmarz-type algorithms in terms of both iteration complexity and computational efficiency, highlighting both its practical and theoretical advantages.
\end{abstract}
\noindent{\bf Keywords.} Randomized extended Kaczmarz method,  Bregman-Kaczmarz, Averaging block, Sparse solutions, Least squares problem
 
\section{Introduction}
Consider solving the following combined optimization problem of the form
\begin{equation}\label{eq:main}
 \min_{x \in \mathbb{R}^n} f(x) \quad \text{s.t.} \quad A x = \hat{y}, \quad \text{where} \quad \hat{y} = \arg \min_{y \in \mathcal{R}(A)} g^*(b - y),
\end{equation}
where $A\in \mathbb{R}^{m\times n}$, $x\in \mathbb{R}^{n}$, and $b\in\mathbb{R}^{m}$. 
The objective function $f$ is assumed to be strongly convex but possibly nonsmooth, and $g^*$ denotes a suitable data-misfit function \cite{Schpfer2022ExtendedRK}. In practical applications, available a priori information such as sparsity can be incorporated into the model.  For instance, the choice $f(x) = \lambda \|x\|_1 + \frac{1}{2}\|x\|^2_2$ promotes sparse solutions for appropriately chosen $\lambda > 0$ \cite{chen2001atomic,donoho2006compressed,cai2009convergence}. 
The data-misfit term $g^*(b-y) = \frac{1}{2}\|b - y\|_2^2$ corresponds to a least-squares formulation that penalizes the deviation between the observed data $b$ and the reconstructed vector $y$. 
With these specific choices of $f(x)$ and $g^*(b-y)$, problem \eqref{eq:main} becomes
\begin{equation} \label{eq:main_partical}
    \min_{x \in \mathbb{R}^n} \lambda\|x\|_1 + \frac{1}{2}\|x\|_2^2 
    \quad \text{s.t.} \quad A x = \hat{y}, 
    \quad \text{where} \quad \hat{y} = \arg \min_{y \in \mathcal{R}(A)} \frac{1}{2}\|b - y\|_2^2,
\end{equation}
which leads to sparse least-squares solutions. 
Such optimization problems are widely encountered in applications including compressed sensing \cite{cai2009linearized,yin2008Bregman}, 
signal processing \cite{elad2010sparse,Xiao2025FastBlockNBK}, 
image reconstruction \cite{liang2020deep}, 
and machine learning \cite{adler2017solving,arridge2019solving,benning2018modern}.

The Kaczmarz method \cite{kaczmarz1937} is a classical and extensively studied iterative technique for solving consistent linear systems owing to its algorithmic simplicity and low computational cost per iteration. In the standard (cyclic) Kaczmarz algorithm, each iteration sequentially selects a row of the coefficient matrix and projects the current iterate onto the hyperplane defined by that row equation. To improve the convergence rate, Strohmer and Vershynin proposed the randomized Kaczmarz (RK) method \cite{strohmer2009randomized}, in which the active row is selected randomly with a probability proportional to the squared Euclidean norm of the row.  They established that the RK method achieves linear convergence in expectation for consistent systems, which sparked extensive research on randomized Kaczmarz algorithms. However, the RK method and its extensions are not applicable to inconsistent systems, as random projections do not generally minimize the residual error in that setting.

To remedy this deficiency, Zouzias and Freris introduced the randomized extended Kaczmarz (REK) method \cite{zouzias2013randomized}, which augments the RK framework with an auxiliary projection step that ensures convergence even when the system is inconsistent. The REK method effectively alternates between updating the primary and auxiliary variables, thereby achieving convergence to the least-squares solution. Building upon this framework, a variety of enhanced algorithms have been developed, including block-based formulations \cite{needell2015randomized} and the randomized extended average block Kaczmarz (REABK) method \cite{du2020randomized}, which further improve computational efficiency and robustness. For comprehensive surveys and additional developments, we refer the reader to \cite{2023BW,2019BW,2019D} and the references therein.

More recently, Bregman-type techniques have been incorporated into Kaczmarz iterations to better exploit structural properties of the desired solution, such as sparsity, encoded by suitable convex regularizers.  
The resulting Bregman-Kaczmarz methods \cite{lorenz2014sparse,schopfer2019linear,tondji2023bregman,tondji2023acceleration,tondji2023adaptive} replace the Euclidean metric with a Bregman distance, thereby generalizing the projection rule and enabling the treatment of nonsmooth structure in the objective function $f$.  This class encompasses a broad family of algorithms, including the sparse Kaczmarz method, which has been shown to be effective for computing sparse solutions of regularized Basis Pursuit problems \cite{lorenz2014linearized,lorenz2014sparse}. The classical sparse Kaczmarz method applies cyclic single-row updates and is guaranteed to converge to the unique solution of the underlying inverse problem \cite{lorenz2014linearized}. A randomized sparse block Kaczmarz method, which samples blocks according to prescribed probabilities, was proposed in \cite{petra2015randomized}. By smoothing the objective function, the author established expected linear convergence, while subsequent work \cite{schopfer2019linear} established similar convergence results without smoothing. To further accelerate convergence, an enhanced variant, the randomized block sparse Kaczmarz method with averaging, was later proposed in \cite{tondji2023faster}. 
The Bregman-Kaczmarz framework has also been integrated with various acceleration strategies, including heavy-ball momentum techniques \cite{lorenz2023minimal}, restarting strategies \cite{tondji2023acceleration}, and surrogate-hyperplane constructions \cite{Dong2025RSHKO,Dong2025}. These enhancements broaden the applicability and improve the practical efficiency of Bregman-based Kaczmarz schemes.

To extend the Bregman-based Kaczmarz methods to inconsistent systems, Schöpfer et al.~\cite{Schpfer2022ExtendedRK} proposed the randomized extended Bregman-Kaczmarz (REBK) method for solving \eqref{eq:main}. To the best of our knowledge, REBK is the first randomized Kaczmarz-type algorithm specifically designed for the combined optimization problem. It extends the classical projection framework by incorporating an auxiliary Bregman-update step that ensures convergence in the presence of inconsistency.

Building upon the foundation laid by REBK, we develop a \emph{randomized averaging block extended Bregman-Kaczmarz} (RABEBK) method for solving general combined optimization problems. The RABEBK scheme incorporates an averaging block strategy, allowing multiple rows of the coefficient matrix $A$ to be processed simultaneously, which enhances computational efficiency and reduces the variance inherent in single-row updates. Despite these advantages, the basic RABEBK iteration may still exhibit slow convergence in practice. To alleviate this issue, we introduce a relaxation-parameter mechanism and obtain a relaxed variant, termed rRABEBK, which constitutes the principal contribution of this work. The relaxation parameters provide additional control over the update dynamics and contribute to substantially improved practical performance.

We establish a rigorous convergence theory for the proposed rRABEBK method, showing that it achieves linear convergence in expectation for general combined optimization problems. The analysis provides explicit constants that quantify the influence of the relaxation mechanism on the convergence behavior.  Complementary numerical results further demonstrate that rRABEBK consistently outperforms existing Kaczmarz-type algorithms, including REBK and block-based variants, in both iteration complexity and overall runtime, thereby confirming the practical advantages of the proposed approach.

The remainder of this paper is organized as follows. 
Section \ref{sec:prelim} introduces the necessary preliminaries and notation. 
Section \ref{sec:REBK_review} provides a brief review of the randomized extended Bregman-Kaczmarz (REBK) method. In Section \ref{sec:RABEBK-c}, we propose a randomized averaging block extended Bregman-Kaczmarz (RABEBK) method, together with a relaxation variant (rRABEBK), and establish the corresponding convergence theory. Section \ref{sec:numerical experiments} presents numerical experiments demonstrating the effectiveness and generality of the proposed approaches. Finally, Section \ref{sec:conclusion} concludes the paper with remarks and potential directions for future work.

\section{ Preliminaries}\label{sec:prelim}
In this section, we introduce notation and recall several concepts and properties from convex analysis that will be used throughout the paper.

For a vector $x \in \mathbb{R}^n$, we denote by $x^\top$, $\|x\|_1$, and $\|x\|_2$ its transpose, $\ell_1$-norm, and $\ell_2$-norm, respectively. For a matrix $A \in \mathbb{R}^{m \times n}$, we use $A_{i,:}$ and $A_{:,j}$ to denote its $i$-th row and $j$-th column, and write $A^\top$, $A^\dagger$, $\mathcal{R}(A)$, $\mathcal{N}(A)$, $\|A\|_F$, and $\sigma_{\max}(A)$ for the transpose, Moore-Penrose pseudoinverse, range space, null space, Frobenius norm, and largest singular value of $A$, respectively. 

For index sets $\mathcal{I} \subseteq [m]$ and $\mathcal{J} \subseteq [n]$, we use 
$A_{\mathcal{I},:}$, $A_{:,\mathcal{J}}$, and $A_{\mathcal{I},\mathcal{J}}$
to denote the submatrix of rows indexed by $\mathcal{I}$, columns indexed by $\mathcal{J}$, and rows indexed by $\mathcal{I}$ together with columns indexed by $\mathcal{J}$, respectively.
We call $\{\mathcal{I}_1,\ldots,\mathcal{I}_s\}$ a partition of $[m]$ if 
$\mathcal{I}_i \cap \mathcal{I}_j = \emptyset$ for $i \neq j$ and 
$\bigcup_{i=1}^s \mathcal{I}_i = [m]$.
Similarly, $\{\mathcal{J}_1,\ldots,\mathcal{J}_t\}$ is a partition of $[n]$ if
$\mathcal{J}_i \cap \mathcal{J}_j = \emptyset$ for $i \neq j$ and
$\bigcup_{j=1}^t \mathcal{J}_j = [n]$.
The notation $|\mathcal{I}|$ denotes the cardinality of a set $\mathcal{I} \subseteq [m]$.

The soft-shrinkage operator $S_\lambda(\cdot)$ is defined componentwise as
$$
    (S_{\lambda}(x))_{j} = \max\{ |x_j| - \lambda,\, 0 \} \cdot \mathrm{sign}(x_j),
$$
where $\rm{sign}(\cdot)$ denotes the sign function.

For $\hat{x} \in \mathbb{R}^n$, let
$$
    \mathrm{supp}(\hat{x}) = \{ j \in \{1,\ldots,n\} \mid \hat{x}_j \neq 0 \},
$$
and define
$$
    |\hat{x}|_{\min} = \min \{ |\hat{x}_j| \mid j \in \mathrm{supp}(\hat{x}) \}.
$$

For an index set $J \subseteq \{1,\ldots,n\}$, we denote by $A_J$ the submatrix of $A$ consisting of columns indexed by $J$, and define
$$
    \tilde{\sigma}_{\min}(A)
    = \min \{ \sigma_{\min}(A_J) \mid J \subseteq \{1,\ldots,n\},\, A_J \neq 0 \},
$$
where $\sigma_{\min}(\cdot)$ denotes the smallest nonzero singular value.

Let $f: \mathbb{R}^n\rightarrow \mathbb{R}$ be convex, the subdifferential of $f$ at any $x \in \mathbb{R}^n$ is defined by 
\begin{equation*}
    \partial f ( x ) \stackrel{\text { def }}{=} \{ x ^ { * }  \in \mathbb{R} ^ { n } | f ( y ) \geq f ( x ) + \langle  x ^ { * } , y - x \rangle  , \forall y\in \mathbb{R}^n\}.
\end{equation*}

The Fenchel conjugate of $f$, $f^*:\mathbb{R}^n \rightarrow \mathbb{R}$, is defined as 
$$f^{*}(x^{*})\stackrel{\text { def }}{=}\sup_{y \in \mathbb{R}^{n}} \{\langle x^{*},y \rangle -f(y)\}.$$

If $f$ is differentiable, let $\nabla f(x)$ denote the gradient of $f(x)$, then $\partial f(x) = \{ \nabla f(x)\}$\cite{schopfer2019linear}.

A convex function $f:\mathbb{R}^n \rightarrow \mathbb{R}$ is said to be $\mu$-strongly convex, if for all $x,y \in \mathbb{R}^n$ and subgradients $x^* \in \partial f(x)$, it is satisfied that 
\begin{equation*}
    f(y)\geq f(x)+ \langle x^{*},y-x \rangle + \frac{\mu}{2}\cdot \|y-x\|_{2}^{2}.
\end{equation*}

\begin{lemma} [\cite{rockafellar1998variational}]
If $f:\mathbb{R}^n \rightarrow \mathbb{R}$ is $\mu$-strongly convex then the conjugate function $f^*$ is differentiable with a $1/\mu$ Lipschitz-continuous gradient, i.e.
\begin{equation*}
    \| \nabla f^{*}(x^{*})- \nabla f^{*}(y^{*})\|_{2}\leq \frac{1}{\mu}\cdot \|x^{*}-y^{*}\|_{2},
\end{equation*}
for all $x^{*},y^{*} \in \mathbb{R}^n$.
Consequently,
\begin{equation} \label{eq:alpha-strongly convex}
   f^{*}(y^{*})\leq f^{*}(x^{*})+\langle \nabla f^{*}(x^{*}),y^{*}-x^{*}\rangle + \frac{1}{2\mu}\|y^{*}-x^{*}\|_{2}^{2}. 
\end{equation}
\end{lemma}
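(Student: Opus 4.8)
The plan is to prove the two assertions in sequence: first that $f^*$ is differentiable with $\nabla f^*$ equal to the unique maximizer in the conjugate problem, then that this gradient is $(1/\mu)$-Lipschitz, and finally to deduce the quadratic upper bound \eqref{eq:alpha-strongly convex} as the standard descent inequality for a smooth function.

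First I would establish that for each $x^* \in \mathbb{R}^n$ the supremum $f^*(x^*) = \sup_{y}\{\langle x^*,y\rangle - f(y)\}$ is attained at a unique point $y(x^*)$. Fixing some $s \in \partial f(0)$, strong convexity gives $f(y) \geq f(0) + \langle s, y\rangle + \frac{\mu}{2}\|y\|_2^2$, so the concave objective $\langle x^*, y\rangle - f(y)$ is coercive and a maximizer exists; strong concavity of the objective makes it unique (and, in particular, $f^*$ is finite-valued everywhere). By the Fenchel--Young optimality condition, $y(x^*)$ is characterized by $x^* \in \partial f(y(x^*))$, which is exactly the relation $y(x^*) \in \partial f^*(x^*)$. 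Since the maximizer is unique, $\partial f^*(x^*)$ is a singleton, and a real-valued convex function is differentiable precisely where its subdifferential is a singleton; hence $f^*$ is differentiable with $\nabla f^*(x^*) = y(x^*)$.

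Next, to obtain the Lipschitz bound, I set $y_1 = \nabla f^*(x_1^*)$ and $y_2 = \nabla f^*(x_2^*)$, so that $x_1^* \in \partial f(y_1)$ and $x_2^* \in \partial f(y_2)$. Writing the strong-convexity inequality once with base point $y_1$ and test point $y_2$, once with the roles reversed, and adding the two, the function values cancel and I obtain the strong-monotonicity estimate $\langle x_1^* - x_2^*,\, y_1 - y_2\rangle \geq \mu\|y_1 - y_2\|_2^2$. Combining this with Cauchy--Schwarz on the left-hand side and dividing by $\|y_1 - y_2\|_2$ (the case $y_1 = y_2$ being trivial) yields $\|y_1 - y_2\|_2 \leq \frac{1}{\mu}\|x_1^* - x_2^*\|_2$, which is the asserted Lipschitz continuity of $\nabla f^*$. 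The consequential inequality then follows from the descent lemma: along the segment $x^* + t(y^* - x^*)$, $t \in [0,1]$, I would write $f^*(y^*) - f^*(x^*) - \langle \nabla f^*(x^*), y^*-x^*\rangle = \int_0^1 \langle \nabla f^*(x^*+t(y^*-x^*)) - \nabla f^*(x^*),\, y^*-x^*\rangle\,dt$, bound the integrand by Cauchy--Schwarz and the Lipschitz estimate by $\frac{t}{\mu}\|y^*-x^*\|_2^2$, and integrate $\int_0^1 t\,dt = \frac{1}{2}$ to reach the bound with constant $\frac{1}{2\mu}$.

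The main obstacle is the differentiability step. The strong-monotonicity and descent-lemma computations are routine manipulations, but rigorously pinning down the existence and uniqueness of the maximizer, the identity $\partial f^*(x^*) = \arg\max_{y}\{\langle x^*,y\rangle - f(y)\}$, and the singleton-subdifferential criterion for differentiability is where the genuine convex-analytic content lies. This is precisely the Legendre--Fenchel duality between strong convexity of $f$ and smoothness of $f^*$ that the cited reference \cite{rockafellar1998variational} packages into a single statement, so in the final write-up I would either invoke that machinery directly or reproduce only the self-contained monotonicity and integration arguments above.
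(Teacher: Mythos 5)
Your proof is correct. The paper does not prove this lemma at all---it is quoted verbatim from Rockafellar and Wets \cite{rockafellar1998variational}---so there is no in-paper argument to compare against; what you have written is the standard self-contained derivation (attainment and uniqueness of the maximizer in the conjugate via coercivity and strict concavity, the identification $\partial f^*(x^*)=\arg\max_y\{\langle x^*,y\rangle-f(y)\}$ together with the singleton-subdifferential criterion for differentiability, strong monotonicity of $\partial f$ plus Cauchy--Schwarz for the $1/\mu$-Lipschitz bound, and the descent lemma for \eqref{eq:alpha-strongly convex}), and every step goes through as you describe.
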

\begin{definition}[Bregman distance \cite{bregman1967relaxation}]
    The Bregman distance $D_{f}^{x^{*}}(x,y)$ between $x,y\in \mathbb{R}^n$ with respect to a strongly convex function $f$ and a subgradient $x^* \in \partial f(x)$ is defined as
\begin{equation} \label{eq:Bregman distance}
    D_{f}^{x^{*}}(x,y)\stackrel{\text { def }}{=}{f(y)-f(x)- \langle x^{*},y-x \rangle } =f^{*}(x^{*})- \langle x^{*},y \rangle +f(y).
\end{equation}
\end{definition}

\begin{lemma} [\cite{schopfer2019linear}] \label{lem:Euclid-Bregman}
    Let $f:\mathbb{R}^n \rightarrow \mathbb{R}$ be $\mu$-strongly convex. For all $x,y \in \mathbb{R}^n$ and $x^* \in \partial f(x)$, it holds that
$$ D _ { f } ^ { x^* } ( x , y ) \geq \frac { \mu } { 2 } \| x - y \| _ { 2 } ^ { 2 },$$
and $D _ { f } ^ { x^* } ( x , y )=0$ if and only if $x=y$.
\end{lemma}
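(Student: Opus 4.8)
The plan is to read the inequality directly off the definition of the Bregman distance together with the defining inequality for $\mu$-strong convexity, and then to extract the equality case from the resulting bound. No genuine obstacle arises: the entire statement is an immediate consequence of the two definitions recorded earlier in this section.

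First I would recall the two ingredients. By \eqref{eq:Bregman distance}, the Bregman distance is
$$D_f^{x^*}(x,y) = f(y) - f(x) - \langle x^*, y-x\rangle,$$
while $\mu$-strong convexity of $f$ asserts, for every $x,y \in \mathbb{R}^n$ and every subgradient $x^* \in \partial f(x)$, that
$$f(y) \geq f(x) + \langle x^*, y-x\rangle + \frac{\mu}{2}\|y-x\|_2^2.$$
Subtracting $f(x) + \langle x^*, y-x\rangle$ from both sides of the strong-convexity inequality and comparing the result with the definition of $D_f^{x^*}(x,y)$ yields exactly $D_f^{x^*}(x,y) \geq \frac{\mu}{2}\|x-y\|_2^2$, which is the claimed lower bound. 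This step is a pure rearrangement and requires no further estimate.

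For the equivalence I would treat the two implications separately. If $x=y$, substituting $y=x$ into the definition gives $D_f^{x^*}(x,x) = f(x)-f(x)-\langle x^*,0\rangle = 0$, so the distance vanishes. Conversely, if $D_f^{x^*}(x,y)=0$, the lower bound just established forces $0 = D_f^{x^*}(x,y) \geq \frac{\mu}{2}\|x-y\|_2^2 \geq 0$, hence $\frac{\mu}{2}\|x-y\|_2^2 = 0$, and therefore $x=y$.

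The only point that genuinely invokes the hypothesis is the implication $D_f^{x^*}(x,y)=0 \Rightarrow x=y$: it uses $\mu > 0$ to pass from $\frac{\mu}{2}\|x-y\|_2^2 = 0$ to $\|x-y\|_2 = 0$. For a merely convex (that is, $\mu = 0$) function the Bregman distance may vanish at distinct points, so strong convexity is precisely what upgrades nonnegativity to positive definiteness off the diagonal. This is the one place I would be careful to state, although it costs only a single line.
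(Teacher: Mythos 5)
Your argument is correct and is exactly the standard derivation: the paper states this lemma as a cited result from \cite{schopfer2019linear} without reproducing a proof, and the intended justification is precisely the one you give, namely rearranging the $\mu$-strong-convexity inequality against the definition \eqref{eq:Bregman distance} and using $\mu>0$ for the equality case. Nothing is missing.
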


\section{The Kaczmarz Method and Some Extensions} \label{sec:REBK_review}
As our work belongs to the class of Kaczmarz-type methods, we begin with a brief introduction to the classical Kaczmarz method and several related variants that are pertinent to our proposed approaches.

The Kaczmarz method \cite{kaczmarz1937} is widely used in scientific and engineering applications due to its simplicity and computational efficiency. The standard Kaczmarz method is designed to solve the consistent linear system $Ax=b$. Given an initial vector $x_0 \in \mathbb{R}^n$, the $(k+1)$-th iterate $x_{k+1}$ is obtained by projecting  $x_k$  onto the hyperplane $$H_{i_k}:\{x:  A^\top _ { i_k,: } x   = b _ { i_k }\},$$ where the row index $i_k$ is selected cyclically from $[m] = \{1,2,\ldots,m\}$.  Precisely, the update rule can be written explicitly as
\begin{equation} \label{eq:classicalKaczmarz}
        x _ { k + 1 }  = x _ { k } + \frac {b _ { i_k } - A _ { i_k,: } x _ { k } } { \| A _ { i_k,:  }\| _ { 2 } ^ { 2 } } \cdot (A _ { i_k,:  })^\top.
\end{equation}

However, the standard Kaczmarz method is not applicable to inconsistent systems. To address this limitation, several extended Kaczmarz methods have been developed \cite{du2020randomized,needell2015randomized,zouzias2013randomized}. A prominent example is the randomized extended Kaczmarz (REK) method introduced in \cite{zouzias2013randomized}. Recall that any right-hand side $b$ of an inconsistent system $Ax=b$ admits the orthogonal decomposition $$b = \hat{y}+z,\qquad \hat{y} \in \mathcal{R}(A),\  z\in  \mathcal{N}(A^\top),$$ where $\hat{y}$ is the projection of $b$ onto the range space of $A$, and $z$ lies in the null space of $A^\top$. Consequently, solving the inconsistent system $Ax=b$ can be reformulated in terms of the following pair of consistent systems:
\begin{equation} \label{eq:BK-2}
Ax = \hat{y},\qquad A^{\top}z = 0.
\end{equation}
The key idea behind the REK method is alternating between randomized Kaczmarz updates for the two consistent systems in \eqref{eq:BK-2} at each iteration, thereby ensuring convergence even when the original system is inconsistent. Applying the randomized Kaczmarz update to each consistent system in \eqref{eq:BK-2} leads to the following REK iteration:
\begin{equation} \label{eq:extendedclassicalKaczmarz}
    \begin{split}
        z_{k+1}&=  z_{k}-\frac{(  A_{:,j_k})^\top   z_{k}}{\|A_{:,j_k}\|_2^2}  A_{:,j_k},\\
   x_{k+1}&= x_{k}-\frac{  A_{i_k,:}  x_{k}-  b_{i_k}+  z_{k+1,i_k}}{  \|A_{i_k,:}\|_2^2 }(  A_{i_k,:})^\top,
    \end{split}
\end{equation}
where the column index $j_k$ and the row index $i_k$ are selected with probabilities
\[
\Pr(\text{index} = j_k) = \frac{\|A_{:,j_k}\|_2^2}{\|A\|^2_F}, \quad
\Pr(\text{index} = i_k) = \frac{\|A_{i_k,:}\|_2^2}{\|A\|^2_F}.
\]
 
Recently, Lorenz et al. \cite{lorenz2014sparse} introduced a Bregman-type variant of the Kaczmarz method, termed the \emph{Bregman--Kaczmarz} (BK) method, for solving the constrained optimization problem\begin{equation} \label{eq:BK}
    \min_{x \in \mathbb{R}^n} f(x) \quad \mathrm{s.t.}\quad Ax=b,
\end{equation}
where $f$ is a strongly convex, possibly nonsmooth objective function.
The BK algorithm proceeds as follows. Given initialization $x_0^{*}=0$ and define $x_0=\nabla f^{*}(x_0^*)$, the approximate solution $x _ { k + 1 }$ at the $k$-th iteration is computed  according to the update scheme
\begin{equation} \label{eq:Bregman-Kaczmarz2}
    \begin{split}
        &x _ { k + 1 } ^ { * } = x _ { k } ^ { * } +  \frac {b _ { i_k } - A _ { i_k,: } x _ { k } } { \| A _ { i_k,:  }\| _ { 2 } ^ { 2 } } \cdot (A _ { i_k,:  })^\top, \\
        &x _ { k + 1 } =\nabla f^{*}(x^{*}_{k+1}) .
    \end{split}
\end{equation}
The update scheme \eqref{eq:Bregman-Kaczmarz2} consists of two stages \cite{tondji2023bregman}. The first stage,  referred to as the \emph{dual space update}, resembles the Kaczmarz update \eqref{eq:classicalKaczmarz}. The second stage, known as the \emph{primal space update}, maps the dual iterate $x_{k+1}^*$ to the primal iterate $x_{k+1}$ through the operator $\nabla f^*$, yielding the $(k+1)$-th approximate solution.

The BK method encompasses several well-known Kaczmarz-type algorithms as special cases, depending on the choice of the objective function $f$. For instance, if $f(x) = \frac{1}{2}\|x\|_2^2$,  then $f^*=f$ and $\nabla f^*(x) = x$. In this case, the BK method reduces to the standard Kaczmarz method \eqref{eq:classicalKaczmarz}, which yields the minimum-norm solution. Alternatively, selecting $f(x) = \lambda \|x\|_1 + \frac{1}{2}\|x\|^2_2$ in \eqref{eq:BK} promotes sparsity in the recovered solution. Under this choice, the mapping $x_{k+1}=\nabla f^*(x^*_{k+1})$ becomes a componentwise soft-thresholding operation,
 $$x _ { k + 1 } =S_{\lambda}(x^{*}_{k+1}),$$ 
where $S_{\lambda}(\cdot)$ denotes the soft-thresholding operator.
In this case, the BK method reduces to the sparse Kaczmarz method \cite{lorenz2014sparse}.

To make the Bregman-Kaczmarz method applicable for the inconsistent systems, Schöpfer et al.~\cite{Schpfer2022ExtendedRK} proposed the randomized extended Bregman-Kaczmarz (REBK) method for solving \eqref{eq:main}. The REBK algorithm can be regarded as a combination of the REK and BK strategies. Therefore,  it is easy to arrive at the update scheme of REBK, that is,
\begin{equation} \label{eq:REBK_scheme}
    \begin{split}
   z_{k+1}^*&=  z_{k}^*-\frac{(  A_{:,j_k})^\top   z_{k}}{\|A_{:,j_k}\|_2^2}  A_{:,j_k},\\
   z_{k+1}&= \nabla g^*(z_{k+1}^*);\\
   x_{k+1}^*&= x_{k}^*-\frac{  A_{i_k,:}  x_{k}-  b_{i_k}+  z^*_{k+1,i_k}}{  \|A_{i_k,:}\|_2^2 }(  A_{i_k,:})^\top, \\
     x_{k+1}&= \nabla f^*(x_{k+1}^*).
    \end{split}
\end{equation}

In particular, when the data-misfit function and objective function are chosen as  \( g^*(b - y) = \frac{1}{2} \| b - y \|_2^2 \) and  $f(x) = \frac{1}{2}\|x\|_2^2$, respectively,  the REBK method reduces to the randomized extended Kaczmarz method \eqref{eq:extendedclassicalKaczmarz}, which converges to the minimum-norm least-squares solution.

\section{The Proposed Methods} \label{sec:RABEBK-c}
The objective of this work is to improve the numerical performance of the REBK method for solving the combined optimization problem \eqref{eq:main}. We pursue this goal in two stages. First, we introduce an averaged block extension of the REBK iteration \eqref{eq:REBK_scheme}. This block formulation allows multiple row projections to be carried out in parallel and subsequently averaged,
thereby enhancing the stability of the iterative process. The resulting scheme forms the foundation of our proposed framework.

Building on this averaged block structure, we further incorporate a relaxation parameter into the update rule. The relaxation step provides an additional degree of freedom for balancing aggressive corrections with numerical robustness, enabling the method to better accommodate inconsistent systems and regularized formulations. With an appropriately chosen relaxation factor, the resulting algorithm exhibits improved convergence behaviour and enhanced practical performance. 

These developments lead to the randomized averaging block extended Bregman-Kaczmarz (RABEBK) method and its relaxed variant, rRABEBK, which will be described in detail in the subsequent subsections.

\subsection{A Randomized Averaging Block Extended Bregman-Kaczmarz Method} \label{sec:RABEBK-d}
It is straightforward to verify that the dual problem associated with
\begin{equation} \label{eq:SLTW17}
    \min_{y \in \mathcal{R}(A)} g^*(b - y),
\end{equation}
 is the constrained optimization problem
\begin{equation} \label{eq:BK01}
    \min_{z \in \mathbb{R}^m} h(z): = g(z)-b^\top z \quad \mathrm{s.t.}\quad A^\top z=0,
\end{equation}
under the assumption that \( g \) is strongly convex, which can guarantee zero duality gap between the primal-dual pair \eqref{eq:SLTW17}-\eqref{eq:BK01}. If we would like to use the BK method \eqref{eq:Bregman-Kaczmarz2} to solve the dual problem \eqref{eq:BK01}, then the specific update scheme is of the form
\begin{equation} \label{eq:BK03}
    \begin{split}
        &\tilde{z}_{k+1}^* = \tilde{z}_k^* -\frac{(  A_{:,j_k})^\top   z_{k}}{\|A_{:,j_k}\|_2^2}  A_{:,j_k}, \\
        &z_{k+1} = \nabla h^*(\tilde{z}_{k+1}^*), 
    \end{split}
 \end{equation}
with initialization \( \tilde{z}_0^* = 0 \). Note that  \( \nabla h^*(\tilde{z}_k^*) = \nabla g^*(\tilde{z}_k^* + b) \), it is convenient to introduce the change of variables  \( z_k^* := \tilde{z}_k^* + b \). Substituting this identity into \eqref{eq:BK03}, we obtain the equivalent form
\begin{equation} \label{eq:BK04}
    \begin{split}
        &z_{k+1}^* = z_{k}^*-\frac{(  A_{:,j_k})^\top   z_{k}}{\|A_{:,j_k}\|_2^2}  A_{:,j_k}, \\
        &z_{k+1} = \nabla g^*(z_{k+1}^*), 
    \end{split}
 \end{equation}
with initialization \( z_0^* = b \). This formulation represents the dual update in the extended Bregman-Kaczmarz framework and will be used later to establish its connection with the REBK method.

In view of the REBK update scheme \eqref{eq:REBK_scheme} for solving the combined optimization problem \eqref{eq:main}, the REBK method can therefore be interpreted as applying the BK method \eqref{eq:Bregman-Kaczmarz2}, together with randomized selection, alternately to the two constrained optimization problems \eqref{eq:BK01} and 
\begin{equation} \label{eq:BK02}
    \min_{ x\in \mathbb{R}^n} f(x) \quad \mathrm{s.t.}\quad Ax=\hat{y},
\end{equation}
with the relation $b = \hat{y}+\nabla g(z)$ derived from first-order optimality condition. 

More precisely, the variable \(z\) is iteratively updated so as to enforce the constraint \(A^\top z = 0\), thereby extracting the component of the data \(b\) lying in the null space \(\mathcal{N}(A^\top)\). 
Alternately, the variable \(x\) is updated to satisfy the linear constraint \(Ax = \hat{y} \in \mathcal{R}(A)\), where \(\hat{y}\) represents the component of \(b\) lying in the range space \(\mathcal{R}(A)\). 
Consequently, the REBK iteration proceeds by alternating between correcting the residual in the auxiliary constraint \(A^\top z = 0\) and refining the primary variable \(x\) with respect to the feasible affine subspace \(Ax = \hat{y}\). Taken together, these complementary updates jointly reconstruct the solution to \eqref{eq:main}.

Thus, at the $k$-th iteration, the REBK method needs to updates two approximate solutions,
$z_{k+1}$ and $x_{k+1}$, using information from the hyperplanes $H_{j_k}:\{z:  A^\top _ { :, j_k} z  = 0\}$ and  $\hat{H}_{i_k}:\{x:  A _ { i_k,: } x   = \hat{y}_ { i_k }\}$, respectively. Note that both $H_{j_k}$ and  $\hat{H}_{i_k}$ involve single-row information, it is natural to consider leveraging averaging block techniques \cite{du2020randomized,necoara2019faster}, which exploit information from multiple hyperplanes simultaneously and average their corresponding projection directions, typically leading to improved stability and faster convergence. Motivated by this idea, we introduce the \emph{randomized averaging block extended Bregman-Kaczmarz} method, abbreviated as RABEBK.

 The update scheme of the RABEBK method can be described as follows. Given initialization $z_0^{*}=b$, $z_0=\nabla g^*(z_0^*)$, $x_0^{*}=0$ and $x_0=\nabla f^{*}(x_0^*)$. At the $k$-th iteration, the method performs the update
\begin{equation} \label{eq:RABEBK_scheme}
    \begin{aligned}
   z_{k+1}^*&=  z_{k}^*-\sum_{l\in\mathcal{J}_{j_k}}\omega_{l}\frac{(  A_{:,l})^\top   z_{k}}{\|A_{:,l}\|_2^2}  A_{:,l},\qquad \omega_l=\frac{ \|A_{:,l}\|_2^2}{\|  A_{:,\mathcal{J}_{j_k}}\|_F^2},\\
   z_{k+1}&= \nabla g^*(z_{k+1}^*);\\
   x_{k+1}^*&= x_{k}^*-\sum_{q\in\mathcal{I}_{i_k}}\omega_q\frac{  A_{q,:}  x_k-  b_q+  z^*_{k+1,q}}{  \|A_{q,:}\|_2^2 }(  A_{q,:})^\top, \qquad \omega_q=\frac{  \|A_{q,:}\|_2^2}{\|  A_{\mathcal{I}_{i_k},:}\|_F^2},\\
     x_{k+1}&= \nabla f^*(x_{k+1}^*).
    \end{aligned}
\end{equation}
At each iteration,  a column block \(\mathcal{J}_{j_k}\) and a row block \(\mathcal{I}_{i_k}\) are selected at random, with probabilities proportional to the Frobenius norms of the associated submatrices:
\[
\Pr(\text{index} = j_k) = \frac{\|A_{:,\mathcal{J}_{j_k}}\|_F^2}{\|A\|_F^2}, \quad
\Pr(\text{index} = i_k) = \frac{\|A_{\mathcal{I}_{i_k},:}\|_F^2}{\|A\|_F^2}.
\]
This sampling strategy favours blocks that have larger energy, as measured by their Frobenius norm, thereby increasing the likelihood of selecting the most informative parts of the matrix.
Once a block is selected, weighted contributions are computed from its constituent rows and columns. Specifically, for each  \(l \in \mathcal{J}_{j_k}\), and  \(q \in \mathcal{I}_{i_k}\), we assign the weights
\[
\omega_l = \frac{\|A_{:,l}\|_2^2}{\|A_{:,\mathcal{J}_{j_k}}\|_F^2}, \quad \omega_q = \frac{\|A_{q,:}\|_2^2}{\|A_{\mathcal{I}_{i_k},:}\|_F^2},
\]
which serve to amplify the influence of those rows and columns with larger Euclidean norms, thereby reflecting their relative importance in the update. In this way, directions with larger weights naturally play a stronger role in the update. The weighted projections are then aggregated to update the dual iterates \(z_{k+1}^*\) and \(x_{k+1}^*\), which are subsequently mapped to the primal variables \(z_{k+1}\) and \(x_{k+1}\) using \(\nabla g^*\) and \(\nabla f^*\), respectively. By simultaneously utilizing multiple hyperplanes in each iteration, RABEBK is able to incorporate richer geometric information than its single-hyperplane counterpart, thereby improving its numerical efficiency.

It is worth noting that the proposed RABEBK method reduces to the REBK method \eqref{eq:REBK_scheme} as a special case when the block size is one, that is, when each selected block contains a single row or column. If we want to find the minimum-norm solution to a least-squares problem and consider the objective function $f(x) = \frac{1}{2}\|x\|_2^2$ along with misfit function $g^*(z) = \frac{1}{2}\|z\|_2^2$, the  RABEBK method specializes  to the randomized extended average block Kaczmarz method proposed in \cite{du2020randomized}.

\subsection{A Randomized Averaging Block Extended Bregman-Kaczmarz Method with Relaxation Parameters} \label{sec:RABEBK-e}
The RABEBK method employs an averaging block strategy to enhance the numerical performance of the REBK scheme; however, this is not the whole story of the present work.
To further improve convergence behaviour, we attempt to introduce relaxation parameters to the update scheme \eqref{eq:RABEBK_scheme}  of the RABEBK method, which leads to the following iteration:
\begin{equation}\label{eq:rRABEBK_scheme}
    \begin{aligned}
   z_{k+1}^*&=  z_{k}^*-\alpha^{(z)}\left(\sum_{l\in\mathcal{J}_{j_k}}\omega_{l}\frac{(  A_{:,l})^\top   z_{k}}{\|A_{:,l}\|_2^2}  A_{:,l}\right),\qquad \omega_l=\frac{ \|A_{:,l}\|_2^2}{\|  A_{:,\mathcal{J}_{j_k}}\|_F^2},\\
   z_{k+1}&= \nabla g^*(z_{k+1}^*);\\
   x_{k+1}^*&= x_{k}^*-\alpha^{(x)}\left(\sum_{q\in\mathcal{I}_{i_k}}\omega_q\frac{  A_{q,:}  x_k-  b_q+  z^*_{k+1,q}}{  \|A_{q,:}\|_2^2 }(  A_{q,:})^\top \right), \qquad \omega_q=\frac{  \|A_{q,:}\|_2^2}{\|  A_{\mathcal{I}_{i_k},:}\|_F^2},\\
     x_{k+1}&= \nabla f^*(x_{k+1}^*).
    \end{aligned}
\end{equation}
Here \(\alpha^{(z)}\) and \(\alpha^{(x)}\) denote the relaxation parameters. A key question is how to choose appropriate values for these parameters. In our method, we set 
$$ \alpha^{(z)} = \frac{\mu_g}{\beta^{\mathcal{J}}_{\max}}\quad\text{and}\quad   \alpha^{(x)} = \frac{\mu_f}{\beta^{\mathcal{I}}_{\max}}, $$ 
where  
\begin{equation}\label{eq:25-23-2}
\beta^{\mathcal{I}}_{\max} := \max_{i_k \in [s]} \frac{\sigma^2_{\max} \left( A_{\mathcal{I}_{i_k},:} \right)}{\| A_{\mathcal{I}_{i_k},:} \|^2_F} \quad\text{and}\quad \beta^{\mathcal{J}}_{\max} := \max_{j_k \in [t]} \frac{\sigma^2_{\max} \left( A_{:,\mathcal{J}_{j_k}} \right)}{\| A_{:,\mathcal{J}_{j_k}} \|^2_F}.
\end{equation}

The rationale behind these choices will be detailed in Remarks \ref{remark:zk_general} and \ref{remark:xk_general}. Because scheme \eqref{eq:rRABEBK_scheme} augments RABEBK with two fixed relaxation parameters that adjust the step sizes of the auxiliary and primary updates, we term the resulting method the \emph{relaxed randomized averaging block extended Bregman-Kaczmarz} (rRABEBK) algorithm. Its implementation is summarized in Algorithm \ref{alg:cRABEBK}.
\begin{algorithm}[!htbp]
 \caption{A Relaxed Randomized Averaging Block Extended Bregman-Kaczmarz Method (rRABEBK)} \label{alg:cRABEBK}
 	\begin{algorithmic}[1]
  \Require matrix $A\in \mathbb{R}^{m\times n}$, $b\in \mathbb{R}^m$ and the maximum iteration $T$. Set $x^{*}_{0}=0$, $x _ { 0} = \nabla f^{*}(x^{*}_{0})$, $z_0^{\ast}=b$ and $z_0=\nabla g^*(z_0^*)$. Set the relaxation parameters  \( \alpha^{(z)} = \frac{\mu_g}{\beta^{\mathcal{J}}_{\max}} \) and \( \alpha^{(x)} = \frac{\mu_f}{\beta^{\mathcal{I}}_{\max}} \).
    \Ensure (approximate) solution of \par $\min_{x \in \mathbb{R}^n} f(x) \quad \text{s.t.} \quad A x = \hat{y}, \quad \text{where} \quad \hat{y} = \arg \min_{y \in \mathcal{R}(A)} g^*(b - y).$
  \State let $\left\{\mathcal{I}_1,\mathcal{I}_2,\cdots,\mathcal{I}_s \right\}$ and $\left\{\mathcal{J}_1,\mathcal{J}_2,\cdots,\mathcal{J}_t \right\}$
		be partitions of $[m]$ and $[n]$, respectively.
		 \For {$k=0, 1,\ldots,T $ }  
   \State select $j_k\in [t]$ with probability ${\rm Pr(index}= j_k)= \frac{\|A_{:,\mathcal{J}_{j_k}}\|_F^2}{\|A\|_F^2}$ 
		\State set $z_{k+1}^*= z_{k}^* - \frac{\alpha^{(z)}} {\lVert A_{:,\mathcal{J}_{j_k}}\rVert^2_F} A_{:,\mathcal{J}_{j_k}}(A_{:,\mathcal{J}_{j_k}})^\top z_{k}$
  \State  update $z_{k+1}=\nabla g^*(z_{k+1}^*)$
	   \State select $i_k\in [s]$ with probability ${\rm Pr(index}= i_k)= \frac{\|A_{\mathcal{I}_{i_k},:}\|_F^2}{\|A\|_F^2}$ 
 \State set $ x_{k+1}^{\ast}=x_{k}^{\ast}-\frac{\alpha^{(x)}}{\|A_{\mathcal{I}_{i_k},:}\|_F^2} (A_{\mathcal{I}_{i_k},:})^\top(A_{\mathcal{I}_{i_k},:}x_k-b_{\mathcal{I}_{i_k}}+z^*_{k+1,\mathcal{I}_{i_k}})$
    \State update  $ x_{k+1}=\nabla f^{\ast}( x_{k+1}^{\ast} )$.
    \State if a stopping criterion is satisfied, stop and \text{return} $x_{k+1}$.
 		\EndFor 
 	\end{algorithmic}
 \end{algorithm}
 
 In the remainder of this section, we focus on the convergence analysis of the rRABEBK method.
Before presenting the main convergence result, we briefly recall the definitions of calmness, linearly regular and grow at most linearly \cite{Schpfer2022ExtendedRK} for completeness. It is worth noting that these properties are satisfied when the function $f$ has a Lipschitz-continuous gradient or when $f$ is a convex piecewise linear-quadratic function, such as $f(x)=\lambda\|x\|_1+\frac{1}{2}\|x\|_2^2$.

\begin{definition} \label{def:2.2}
The (set-valued) subdifferential mapping $\partial f : \mathbb{R}^n \rightrightarrows \mathbb{R}^n$ is said to be \emph{calm} at $\hat{x}$ if there exist constants $c > 0$ and $L > 0$ such that
\begin{equation*}
    \partial f(x) \subset \partial f(\hat{x}) + L \|x - \hat{x}\|_{2} \, B_{2}
    \quad \text{for any } x \text{ with } \|x - \hat{x}\|_{2} \le c,
\end{equation*}
where $B_{2}$ denotes the closed unit ball in the $\ell_{2}$ norm.
\end{definition}

\begin{definition} \label{def:2.3}
 Suppose that $\partial f(x)\cap \mathcal{R}(A^{\top})\neq \emptyset$. The collection $\{\partial f(\hat{x}),\, \mathcal{R}(A^{\top})\}$ is said to be linearly regular if there exists a constant $\zeta > 0$ such that, for all $x^* \in \mathbb{R}^n$, 
\begin{equation*} \label{eq:linearly regular}
    \mathrm{dist}(x^{*}, \partial f(\hat{x})\cap \mathcal{R}(A^{\top}))\leq \zeta \cdot\Big(\mathrm{dist}(x^{*}, \partial f(\hat{x}))+\mathrm{dist}(x^{*},\mathcal{R}(A^{\top}))\Big).
\end{equation*}
\end{definition}

\begin{definition} \label{def:2.4}
    The subdifferential mapping $\partial f$ is said to grow at most linearly if there exist constants $\rho_{1}, \rho_{2} \ge 0$ such that, for all $x \in \mathbb{R}^{n}$ and all $x^* \in \partial f(x)$,
    \begin{equation*}
        \|x^*\|_{2} \le \rho_{1} \, \|x\|_{2} + \rho_{2}.
    \end{equation*}
\end{definition}

\begin{theorem}[\cite{Schpfer2022ExtendedRK}, Theorem 3.9]  \label{thm:theta}
    Consider the problem \eqref{eq:BK} with a strongly convex function $f:\mathbb{R}^n \rightarrow \mathbb{R}$. If its subdifferential mapping grows at most linearly, is calm at the unique solution $\hat{x}$ of \eqref{eq:BK} and the collection $\{\partial f(\hat{x}), \mathcal{R}(A^{\top})\}$ is linearly regular, then there exists a constant $\theta(\hat{x})$ such that, for all $x\in \mathbb{R}^n$ and $x^* \in  \partial f(x) \cap \mathcal{R}(A^{\top})$, the following global error bound holds:
\begin{equation} \label{eq:global error bound}
    D_{f}^{x^{*}}(x, \hat{x})\leq \frac{1}{\theta(\hat{x})}\cdot \|Ax-b\|_{2}^{2}.
\end{equation}
\end{theorem}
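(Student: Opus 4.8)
The plan is to reduce the error bound to a metric-subregularity estimate for the optimality system of \eqref{eq:BK} and then to control that estimate by $\|Ax-b\|_{2}$ in two regimes. Since $f$ is strongly convex and the feasible set of \eqref{eq:BK} is an affine subspace, $\hat{x}$ is the unique minimizer, and its first-order optimality condition supplies a subgradient $\hat{x}^{*}\in\partial f(\hat{x})\cap\mathcal{R}(A^{\top})$ with $A\hat{x}=b$. The starting point is the elementary observation that both Bregman distances $D_{f}^{x^{*}}(x,\hat{x})$ and $D_{f}^{\hat{x}^{*}}(\hat{x},x)$ are nonnegative, so by \eqref{eq:Bregman distance} their sum telescopes to an inner product, giving $D_{f}^{x^{*}}(x,\hat{x})\le\langle x^{*}-\hat{x}^{*},\,x-\hat{x}\rangle$. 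Because $x^{*},\hat{x}^{*}\in\mathcal{R}(A^{\top})$, I would write $x^{*}-\hat{x}^{*}=A^{\top}v$ with $v\in\mathcal{R}(A)$ and, using that $Ax-b=A(x-\hat{x})\in\mathcal{R}(A)$ (the system is consistent), rewrite the inner product as $\langle v,\,Ax-b\rangle$. This yields the controlling inequality $D_{f}^{x^{*}}(x,\hat{x})\le \sigma_{\min}(A)^{-1}\,\|x^{*}-\hat{x}^{*}\|_{2}\,\|Ax-b\|_{2}$, so the whole theorem reduces to the dual error bound $\|x^{*}-\hat{x}^{*}\|_{2}\lesssim\|Ax-b\|_{2}$ for a suitable choice of $\hat{x}^{*}$.

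For $x$ near $\hat{x}$ (say $\|x-\hat{x}\|_{2}\le c$, the calmness radius), I would take $\hat{x}^{*}$ to be the projection of $x^{*}$ onto the closed convex set $\partial f(\hat{x})\cap\mathcal{R}(A^{\top})$, so that $\|x^{*}-\hat{x}^{*}\|_{2}=\mathrm{dist}(x^{*},\partial f(\hat{x})\cap\mathcal{R}(A^{\top}))$. Linear regularity (Definition \ref{def:2.3}) bounds this by $\zeta\big(\mathrm{dist}(x^{*},\partial f(\hat{x}))+\mathrm{dist}(x^{*},\mathcal{R}(A^{\top}))\big)$, where the second term vanishes because $x^{*}\in\mathcal{R}(A^{\top})$; calmness (Definition \ref{def:2.2}) then gives $\mathrm{dist}(x^{*},\partial f(\hat{x}))\le L\|x-\hat{x}\|_{2}$. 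Combining with the strong-convexity lower bound $\|x-\hat{x}\|_{2}\le\sqrt{2/\mu}\,\sqrt{D_{f}^{x^{*}}(x,\hat{x})}$ from Lemma \ref{lem:Euclid-Bregman} and feeding the result back into the controlling inequality, a factor $\sqrt{D_{f}^{x^{*}}(x,\hat{x})}$ cancels on both sides, producing the local bound $D_{f}^{x^{*}}(x,\hat{x})\le \big(2\zeta^{2}L^{2}/(\mu\,\sigma_{\min}(A)^{2})\big)\,\|Ax-b\|_{2}^{2}$.

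The main obstacle is the far regime $\|x-\hat{x}\|_{2}>c$, where calmness is unavailable; here I would argue by contradiction and compactness, and this is precisely where the growth hypothesis (Definition \ref{def:2.4}) becomes indispensable. Suppose the ratio $\|Ax-b\|_{2}^{2}/D_{f}^{x^{*}}(x,\hat{x})$ had infimum zero on the admissible set, giving a sequence $(x_{n},x_{n}^{*})$ with $x_{n}^{*}\in\partial f(x_{n})\cap\mathcal{R}(A^{\top})$. If $\{x_{n}\}$ is bounded, the growth condition keeps $\{x_{n}^{*}\}$ bounded, and closedness of the graph of $\partial f$ together with uniqueness of $\hat{x}$ forces any limit to equal $\hat{x}$, pushing the tail of the sequence into the already-settled local regime and contradicting the vanishing ratio. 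If $\{x_{n}\}$ is unbounded, I would use that $\partial f(\hat{x})$ is compact to keep the projections $\hat{x}_{n}^{*}$ bounded, combine the strong-monotonicity lower bound $\langle x_{n}^{*}-\hat{x}_{n}^{*},\,x_{n}-\hat{x}\rangle\ge\mu\|x_{n}-\hat{x}\|_{2}^{2}$ with the linear growth upper bound $\|x_{n}^{*}-\hat{x}_{n}^{*}\|_{2}\le\rho_{1}\|x_{n}-\hat{x}\|_{2}+O(1)$, and pull back through $\sigma_{\min}(A)$ to conclude $\|Ax_{n}-b\|_{2}\gtrsim\|x_{n}-\hat{x}\|_{2}$; this linear lower bound rules out escape along $\mathcal{N}(A)$ and again contradicts the vanishing ratio. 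Taking $\theta(\hat{x})$ to be the smaller of the two regime constants then yields \eqref{eq:global error bound}. The delicate point throughout is guaranteeing that the dual iterates cannot blow up faster than linearly while $\|Ax-b\|_{2}$ stays small, which is exactly the content of the growth hypothesis.
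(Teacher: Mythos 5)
This theorem is imported by the paper verbatim from \cite{Schpfer2022ExtendedRK} (Theorem 3.9) and is not proved here, so there is no in-paper argument to compare against; your attempt has to be judged against the cited reference's strategy, and it is in fact a faithful reconstruction of it. The skeleton is right and each hypothesis is used where it must be: the three-point identity $D_{f}^{x^{*}}(x,\hat{x})+D_{f}^{\hat{x}^{*}}(\hat{x},x)=\langle x^{*}-\hat{x}^{*},x-\hat{x}\rangle$ gives the controlling inequality $D_{f}^{x^{*}}(x,\hat{x})\le \tilde{\sigma}_{\min}(A)^{-1}\|x^{*}-\hat{x}^{*}\|_{2}\|Ax-b\|_{2}$ (note the constant must be the smallest \emph{nonzero} singular value, which your restriction $v\in\mathcal{R}(A)$ correctly licenses); calmness plus linear regularity closes the local regime with the cancellation of $\sqrt{D}$; and strong monotonicity against the at-most-linear growth of $\partial f$ handles points far from $\hat{x}$. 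Two spots deserve tightening. First, in the bounded far regime your phrase ``uniqueness of $\hat{x}$ forces any limit to equal $\hat{x}$'' skips a step: the limit point $x_{\infty}$ is only known to be feasible with $\partial f(x_{\infty})\cap\mathcal{R}(A^{\top})\neq\emptyset$, and you should say explicitly that for an affine constraint this is the first-order optimality condition and hence \emph{sufficient} for optimality (write $x_{\infty}^{*}=A^{\top}w$ and observe $f(y)\ge f(x_{\infty})+\langle w,Ay-Ax_{\infty}\rangle=f(x_{\infty})$ for all feasible $y$); only then does uniqueness apply. Second, your unbounded-case estimate actually yields a direct quantitative bound for all $\|x-\hat{x}\|_{2}\ge R$ without any contradiction argument, so the compactness step is really only needed on the annulus $c\le\|x-\hat{x}\|_{2}\le R$; organizing it that way makes the non-constructive part of $\theta(\hat{x})$ smaller and the argument cleaner. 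With those clarifications the proof is correct.
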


When $f(x) = \lambda \|x\|_1 + \frac{1}{2}\|x\|^2_2$, it is obvious that $f$ is 1-strongly convex and that $\nabla f^{*}(x^{*})$ is equal to $S_{\lambda}(x^{*})$. Moreover, it is provided explicitly in \cite{schopfer2019linear} that
   \begin{equation} \label{eq:theta better form}
    \frac{1}{\theta(\hat{x})} = \frac { 1 } { \tilde { \sigma } _ { \min } ^ { 2 } ( A ) } \cdot \frac {  |\hat {  x } |_ {\min} + 2 \lambda } { | \hat { x } | _{\min} }.
\end{equation}

We are now in the position to investigate the convergence behaviour of Algorithm \ref{alg:cRABEBK}. Since the update of $x_k$ depends directly on the auxiliary sequence $\{z_k^*\}$, 
it is natural to begin by analyzing the convergence of the sequence $\{z_k^*\}$.

\begin{theorem} \label{thm:convergence RABEBK_z}
Let \( g : \mathbb{R}^m \to \mathbb{R} \) be \(\mu_g\)-strongly convex with an \(L_g\)-Lipschitz continuous gradient. If the relaxation parameter $\alpha^{(z)}$ satisfies  $$0 < \alpha^{(z)} < 2\mu_g/\beta^{\mathcal{J}}_{\max},$$  then the iterates \( \{z_k^*\} \) generated by the rRABEBK method converge in expectation to \[\hat{z} := b - \hat{y}, \qquad \text{where}\ \hat{y} \in \mathcal{R}(A),\]  which is the unique solution of the constrained optimization problem \eqref{eq:BK01}. In particular, the iterates \( \{z_k^*\} \) satisfy the following estimate:
\[
\mathbb{E} \left[ \| z^*_{k+1} - (b - \hat{y}) \|^2_2 \right] \leq \frac{2L_g}{\mu_g}\left( 1 - \frac{\alpha^{(z)}(2\mu_g -\beta_{\max}^{\mathcal{J}}\alpha^{(z)})\theta(\hat{z}) }{2\mu_g \| A \|_F^2} \right)^{k+1}D_{h}^{\tilde{z}^{*}_0}(z_0, \hat{z}).
\]
\end{theorem}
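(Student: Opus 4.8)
The plan is to analyze the auxiliary sequence $\{z_k^*\}$ independently of $\{x_k^*\}$, since the $z$-update in \eqref{eq:rRABEBK_scheme} does not depend on $x_k$. The key observation is that the $z$-update is exactly a relaxed averaging block Bregman-Kaczmarz iteration applied to the dual problem \eqref{eq:BK01}, namely $\min_z h(z) = g(z) - b^\top z$ subject to $A^\top z = 0$, whose unique solution is $\hat z = b - \hat y$. I would work in the dual space and track the squared Bregman-type distance using the convenient identity $\tilde z_k^* = z_k^* - b$ so that $\tilde z_0^* = 0 \in \mathcal{R}(A)$, which keeps the dual iterates in $\mathcal{R}(A)$ throughout. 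The core of the argument is a one-step contraction estimate in expectation for $\mathbb{E}\|z_{k+1}^* - \hat z\|_2^2$ (or for the Bregman distance), followed by unrolling the recursion.

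\textbf{The one-step estimate.}
First I would expand the single-step progress. Using the block update $z_{k+1}^* = z_k^* - \frac{\alpha^{(z)}}{\|A_{:,\mathcal{J}_{j_k}}\|_F^2} A_{:,\mathcal{J}_{j_k}} (A_{:,\mathcal{J}_{j_k}})^\top z_k$ and the strong-convexity/Lipschitz bound \eqref{eq:alpha-strongly convex}, I would write the Bregman distance $D_h^{\tilde z_{k+1}^*}$ and bound its decrease. The relaxation parameter enters quadratically: the step produces a term $-\alpha^{(z)}\langle \text{gradient direction}, \cdot\rangle + \tfrac{(\alpha^{(z)})^2}{2\mu_g}\|\cdot\|^2$, and here the factor $\beta^{\mathcal{J}}_{\max} = \max_{j_k} \sigma_{\max}^2(A_{:,\mathcal{J}_{j_k}})/\|A_{:,\mathcal{J}_{j_k}}\|_F^2$ is precisely what controls the squared block-projection term, giving the net coefficient $\alpha^{(z)}(2\mu_g - \beta^{\mathcal{J}}_{\max}\alpha^{(z)})$ after accounting for the $1/(2\mu_g)$ Lipschitz constant. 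The assumption $0 < \alpha^{(z)} < 2\mu_g/\beta^{\mathcal{J}}_{\max}$ guarantees this coefficient is positive, so the step is genuinely contractive. Taking conditional expectation over the random block choice $j_k$ (sampled with probability $\|A_{:,\mathcal{J}_{j_k}}\|_F^2/\|A\|_F^2$) collapses the weighted block sum into a full gradient term $\frac{1}{\|A\|_F^2}\|A^\top z_k - A^\top \hat z\|_2^2$-type quantity, where I use $A^\top \hat z = 0$.

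\textbf{From the one-step bound to the rate.}
Next I would invoke the global error bound of Theorem~\ref{thm:theta} applied to the dual problem, which yields $D_h^{\tilde z_k^*}(z_k, \hat z) \le \frac{1}{\theta(\hat z)}\|A^\top z_k\|_2^2$ (equivalently controls the residual in the constraint $A^\top z = 0$), allowing me to convert the expected gradient-squared decrease into a multiplicative contraction of the Bregman distance with factor $\bigl(1 - \tfrac{\alpha^{(z)}(2\mu_g - \beta^{\mathcal{J}}_{\max}\alpha^{(z)})\theta(\hat z)}{2\mu_g \|A\|_F^2}\bigr)$. Iterating over $k$ and taking total expectation gives the geometric decay of $\mathbb{E}[D_h^{\tilde z_k^*}(z_k,\hat z)]$. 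Finally, I would pass from the Bregman distance back to the Euclidean norm: Lemma~\ref{lem:Euclid-Bregman} lower-bounds $D_h \ge \tfrac{\mu_g}{2}\|z_k^* - \hat z\|_2^2$-type quantities in the dual variable, and the $L_g$-Lipschitz gradient of $g$ supplies the matching upper bound on the initial distance, together producing the prefactor $\tfrac{2L_g}{\mu_g}$ and the initial term $D_h^{\tilde z_0^*}(z_0,\hat z)$.

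\textbf{Main obstacle.}
The hardest part will be handling the block-averaging structure cleanly in expectation: unlike the single-row REBK case, the weighted sum $\sum_{l\in\mathcal{J}_{j_k}}\omega_l(\cdot)$ must be rewritten as a matrix product $A_{:,\mathcal{J}_{j_k}}(A_{:,\mathcal{J}_{j_k}})^\top z_k / \|A_{:,\mathcal{J}_{j_k}}\|_F^2$, and the spectral bound $\|A_{:,\mathcal{J}_{j_k}}(A_{:,\mathcal{J}_{j_k}})^\top z_k\|_2^2 \le \sigma_{\max}^2(A_{:,\mathcal{J}_{j_k}})\,\|(A_{:,\mathcal{J}_{j_k}})^\top z_k\|_2^2$ must be uniformized via $\beta^{\mathcal{J}}_{\max}$ so that the quadratic error term is correctly absorbed. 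Establishing that the conditional expectation of the weighted block residual telescopes exactly into $\frac{1}{\|A\|_F^2}\|A^\top z_k\|_2^2$ — and confirming that the constant $\beta^{\mathcal{J}}_{\max}$ is the sharp factor appearing in the rate — is the delicate calculation I would verify most carefully.
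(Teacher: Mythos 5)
Your proposal is correct and follows essentially the same route as the paper: the change of variables $\tilde z_k^* = z_k^* - b$, the one-step Bregman-distance decrease via the $1/\mu_g$-smoothness of $h^*$ with the spectral factor $\beta^{\mathcal{J}}_{\max}$, the conditional expectation collapsing to $\|A^\top z_k\|_2^2/\|A\|_F^2$, the error bound of Theorem~\ref{thm:theta}, and the final passage back to $\|z_{k+1}^*-(b-\hat y)\|_2^2$ via Lemma~\ref{lem:Euclid-Bregman} and the Lipschitz gradient of $g$. The only minor imprecision is that the $L_g$-Lipschitz bound is used to convert the primal iterate $z_{k+1}$ into the dual iterate $z_{k+1}^*$ (via $z_{k+1}^*=\nabla g(z_{k+1})$), not to bound the initial distance.
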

\begin{proof}
    Computing the approximate solutions $z^*_k $ amounts to applying the randomized averaging block extended Bregman-Kaczmarz method with relaxation parameter $\alpha^{(z)}$ to the constrained optimization problem \eqref{eq:BK01}. Hence, following the argument  in Section~\ref{sec:RABEBK-d}, the update scheme of \( z_k^* \) in the rRABEBK method is equivalent to
\begin{equation} \label{eq:th421}
   \begin{split}
       &\tilde{z}^*_{k+1} = \tilde{z}^*_k - \frac{\alpha^{(z)}}{\|A_{:,\mathcal{J}_{j_k}}\|^2_F} A_{:,\mathcal{J}_{j_k}}(A_{:,\mathcal{J}_{j_k}})^\top z_k, \\
       &z_{k+1} = \nabla h^*(\tilde{z}^*_{k+1}), 
   \end{split}
\end{equation}
with  initialization \( \tilde{z}^*_0 = 0 \). 

Since $g$ is $\mu_g$-strongly convex, it is known that $h(z)=g(z)-b^\top z$ is also $\mu_g$-strongly convex and $\nabla h^*$ is $1/\mu_g$-Lipschitz continuous. Recalling the definition of the Bregman distance \eqref{eq:Bregman distance},  the inequality \eqref{eq:alpha-strongly convex} implies
\begin{equation*}
\begin{split}
   D_{h}^{\tilde{z}^*_{k+1}}(z_{k+1},\hat{z})&=h^{*}(\tilde{z}^*_{k+1})- \langle \tilde{z}^*_{k+1},\hat{z} \rangle +h(\hat{z}) \\
   &\leq h^{*}(\tilde{z}^*_{k}) +\langle \nabla h^{*}(\tilde{z}^*_{k}),\tilde{z}^*_{k+1}-\tilde{z}_k^*\rangle + \frac{1}{2\mu_g}\|\tilde{z}^*_{k+1}-\tilde{z}_k^*  \|^2_2 +h(\hat{z})-\langle \tilde{z}^*_{k+1},\hat{z}\rangle,
\end{split}
\end{equation*}
where $\hat{z}$ is the unique solution of \eqref{eq:BK01}.

Using the fact $z_k = \nabla h^{*}(\tilde{z}^*_k)$, we obtain
\begin{equation*}
   \begin{split}
       D_{h}^{\tilde{z}_{k+1}^*}(z_{k+1},\hat{z}) &\leq D_{h}^{\tilde{z}_k^*}(z_k,\hat{z}) + \langle z_k, \tilde{z}_{k+1}^* - \tilde{z}_k^* \rangle + \frac{1}{2\mu_g}\|\tilde{z}_{k+1}^* - \tilde{z}_k^*\|_2^2 - \langle \tilde{z}_{k+1}^* - \tilde{z}_k^*, \hat{z} \rangle \\
       &= D_{h}^{\tilde{z}_k^*}(z_k,\hat{z}) + \langle \tilde{z}_{k+1}^* - \tilde{z}_k^*, z_k - \hat{z} \rangle + \frac{1}{2\mu_g}\|\tilde{z}_{k+1}^* - \tilde{z}_k^*\|_2^2 \\
       &= D_{h}^{\tilde{z}_k^*}(z_k,\hat{z}) - \langle \frac{\alpha^{(z)}}{\|A_{:,\mathcal{J}_{j_k}}\|_F^2} A_{:,\mathcal{J}_{j_k}} (A_{:,\mathcal{J}_{j_k}})^\top z_k, z_k - \hat{z}\rangle + \frac{1}{2\mu_g}\|\tilde{z}_{k+1}^* - \tilde{z}_k^*\|_2^2 \\
       &= D_{h}^{\tilde{z}_k^*}(z_k,\hat{z}) - \langle \frac{\alpha^{(z)}}{\|A_{:,\mathcal{J}_{j_k}}\|_F^2} (A_{:,\mathcal{J}_{j_k}})^\top z_k, (A_{:,\mathcal{J}_{j_k}})^\top z_k - (A_{:,\mathcal{J}_{j_k}})^{\top}\hat{z}\rangle + \frac{1}{2\mu_g}\|\tilde{z}_{k+1}^* - \tilde{z}_k^*\|_2^2.
   \end{split}
\end{equation*}
Recalling  that $(A_{:,\mathcal{J}_{j_k}})^{\top}\hat{z}=0$, we arrive at
\begin{align} \label{eq:conver_z_const}
D_{h}^{\tilde{z}_{k+1}^*}(z_{k+1},\hat{z}) &\leq D_{h}^{\tilde{z}_k^*}(z_k,\hat{z}) - \frac{\alpha^{(z)}}{\|A_{:,\mathcal{J}_{j_k}}\|_F^2} \|(A_{:,\mathcal{J}_{j_k}})^\top z_k\|_2^2 + \frac{1}{2\mu_g}\frac{(\alpha^{(z)})^2}{\|A_{:,\mathcal{J}_{j_k}}\|_F^4} \|A_{:,\mathcal{J}_{j_k}} (A_{:,\mathcal{J}_{j_k}})^\top z_k\|_2^2\\
   &\leq D_{h}^{\tilde{z}_k^*}(z_k,\hat{z}) -\frac{\alpha^{(z)}}{\|A_{:,\mathcal{J}_{j_k}}\|_F^2} \|(A_{:,\mathcal{J}_{j_k}})^\top z_k\|_2^2 + \frac{1}{2\mu_g}\frac{(\alpha^{(z)})^2}{\|A_{:,\mathcal{J}_{j_k}}\|_F^4}\sigma_{\max}^2(A_{:,\mathcal{J}_{j_k}} ) \|(A_{:,\mathcal{J}_{j_k}})^\top z_k\|_2^2 \notag \\ 
   &=D_{h}^{\tilde{z}_k^*}(z_k,\hat{z}) - \frac{\alpha^{(z)}(2\mu_g-\frac{\sigma_{\max}^2(A_{:,\mathcal{J}_{j_k}} ) }{\|A_{:,\mathcal{J}_{j_k}}\|_F^2}\alpha^{(z)})}{2\mu_g} \frac{\|(A_{:,\mathcal{J}_{j_k}})^\top z_k\|_2^2}{\|A_{:,\mathcal{J}_{j_k}}\|_F^2}.\label{eq:conver_z_const2}
\end{align}

It follows from the global error bound \eqref{eq:global error bound} in Theorem \ref{thm:theta} that
\begin{equation*}
   D_{h}^{\tilde{z}_k^*}(z_k,\hat{z})\leq \frac{1}{\theta(\hat{z})}\|A^\top z\|_2^2,
\end{equation*}
by which we have
\begin{equation*}
   \mathbb{E}\left[\frac{\|(A_{:,\mathcal{J}_{j_k}})^\top z_k\|_2^2}{\|A_{:,\mathcal{J}_{j_k}}\|_F^2}\right] = \sum_{j_k=1}^{t}\frac{\|A_{:,\mathcal{J}_{j_k}}\|^2_F}{\|A\|_F^2}\frac{\|(A_{:,\mathcal{J}_{j_k}})^\top z_k\|_2^2}{\|A_{:,\mathcal{J}_{j_k}}\|_F^2} = \frac{\|A^\top z\|^2_2}{\|A\|_F^2}\geq \frac{\theta(\hat{z})D_{h}^{\tilde{z}_k^*}(z_k,\hat{z})}{\|A\|^2_F}.
\end{equation*}

Recall that $$\beta^{\mathcal{J}}_{\max} = \max_{j_k \in [t]} \frac{\sigma^2_{\max} \left( A_{:,\mathcal{J}_{j_k}} \right)}{\| A_{:,\mathcal{J}_{j_k}} \|^2_F}.$$
By taking the expectation conditional on the first $k$ iterations on both sides of \eqref{eq:conver_z_const2} and invoking linearity of expectation, we obtain
\begin{equation}
\begin{split}
   \mathbb{E}\left[ D_{h}^{\tilde{z}^{*}_{k+1}}(z_{k+1}, \hat{z}) \right] &\leq \mathbb{E}\left[ D_{h}^{\tilde{z}^{*}_k}(z_k, \hat{z})  \right]- \frac{\alpha^{(z)}(2\mu_g -\beta_{\max}^{\mathcal{J}}\alpha^{(z)}) }{2\mu_g } \frac{\theta(\hat{z})}{\|A\|_F^2}\mathbb{E}\left[ D_{h}^{\tilde{z}^{*}_k}(z_k, \hat{z})  \right]\\
&= \left( 1 - \frac{\alpha^{(z)}(2\mu_g -\beta_{\max}^{\mathcal{J}}\alpha^{(z)})\theta(\hat{z}) }{2\mu_g \| A \|_F^2} \right) \mathbb{E} \left[ D_{h}^{\tilde{z}^{*}_k}(z_k, \hat{z}) \right].
\end{split}
   \end{equation}
    
Lemma \ref{lem:Euclid-Bregman}  says that the Euclidean distance provides a lower
bound for the Bregman distance. Hence,
\begin{equation}
   \mathbb{E} \left[\|z_{k+1}-\hat{z}\|_2^2\right]\leq \frac{2}{\mu_g}\left( 1 - \frac{\alpha^{(z)}(2\mu_g -\beta_{\max}^{\mathcal{J}}\alpha^{(z)})\theta(\hat{z}) }{2\mu_g \| A \|_F^2} \right)^{k+1}D_{h}^{\tilde{z}^{*}_0}(z_0, \hat{z}).
\end{equation}
Now we arrive at the conclusion
\begin{equation} \label{eq:convergence of zk_general}
   \begin{split}
        \mathbb{E} \left[\|z^*_{k+1}-(b-\hat{y})\|_2^2 \right] &=  \mathbb{E} \left[\|\nabla g(z_{k+1})-\nabla g(\hat{z})\|_2^2\right]\leq L_g  \mathbb{E} \left[\|z_{k+1}-\hat{z}\|_2^2\right] \\
        &\leq \frac{2L_g}{\mu_g}\left( 1 - \frac{\alpha^{(z)}(2\mu_g -\beta_{\max}^{\mathcal{J}}\alpha^{(z)})\theta(\hat{z}) }{2\mu_g \| A \|_F^2} \right)^{k+1}D_{h}^{\tilde{z}^{*}_0}(z_0, \hat{z}),
   \end{split}
\end{equation}
where the first inequality exploits the fact that  $g$ has a Lipschitz-continuous gradient with constant $L_g$.
\end{proof}

\begin{remark}\label{remark:zk_general}
Theorem~\ref{thm:convergence RABEBK_z} establishes that the auxiliary sequence \(\{z_k^*\}\) generated by rRABEBK converges linearly in expectation to the dual optimum \(\hat{z} = b - \hat{y}\), which is the unique solution of \eqref{eq:BK01}.  The admissible stepsize range \(0 < \alpha^{(z)} < 2\mu_g / \beta_{\max}^{\mathcal{J}}\) ensures a
sufficient decrease along each Bregman-type projection step.  
Within this range, the optimal convergence rate is attained by choosing
$$\alpha^{(z)} = 
\frac{\mu_g}{ \beta_{\max}^{\mathcal{J}}},$$ yielding
    \begin{equation*}
        \mathbb{E} \left[\|z^*_{k+1}-(b-\hat{y})\|_2^2 \right] \leq \frac{2L_g}{\mu_g}\left( 1 - \frac{\mu_g \theta(\hat{z}) }{2\beta_{\max}^{\mathcal{J}}\| A \|_F^2} \right)^{k+1}D_{h}^{\tilde{z}^{*}_0}(z_0, \hat{z}).
    \end{equation*}
Intuitively, this means that when the blocks are informative, and the stepsize is chosen appropriately, the auxiliary iterates contract toward the optimal solution at a guaranteed linear rate.
\end{remark}

Having established the convergence of the auxiliary sequence $\{z_k^*\}$, we proceed to analyze the convergence behavior of the primary iterates $\{x_k\}$ generated by Algorithm \ref{alg:cRABEBK}.
As we will see, the convergence properties of $\{z_k^*\}$ play a crucial role in controlling the primary updates and hence form the backbone of the subsequent analysis.

\begin{theorem} \label{thm:convergence RABEBK_x}
    Let \( f : \mathbb{R}^n \to \mathbb{R} \) be $\mu_f$-strongly convex, and assume that the conditions in Theorem \ref{thm:theta} hold. Then, the iterates \( \{x_k\} \) produced by Algorithm \ref{alg:cRABEBK} converge in expectation to the unique solution $\hat{x}$ of the constrained optimization problem \eqref{eq:BK02}. More precisely,
for any $\varepsilon > 0 $ and  $0 < \alpha^{(x)} \leq \mu_f/\beta^{\mathcal{I}}_{\max}
$,  there exist constants \( c_1,c_2\) such that
\begin{equation}\label{eq:thm_x_convergence_general}
\begin{split}
   \mathbb{E} \left[D_{f}^{x_{k+1}^{*}}(x_{k+1},\hat{x})\right] \leq &\left(1-c_1\cdot\gamma(\hat{x})\right)\mathbb{E}\left[D_{f}^{x_{k}^{*}}(x_{k},\hat{x})\right] \\&+c_2\cdot \frac{2L_g}{\mu_g}\left( 1 - \frac{\alpha^{(z)}(2\mu_g -\beta_{\max}^{\mathcal{J}}\alpha^{(z)})\theta(\hat{z}) }{2\mu_g \| A \|_F^2} \right)^{k+1}D_{h}^{\tilde{z}^{*}_0}(z_0, \hat{z}),
\end{split}
\end{equation}
where $$c_1=\frac{\alpha^{(x)}(2\mu_f-\beta^{\mathcal{I}}_{\max}\alpha^{(x)})-\frac{1}{\varepsilon}\alpha^{(x)}(\mu_f-\beta^{\mathcal{I}}_{\max}\alpha^{(x)})}{2\mu_f\|A\|^2_F},$$
and
$$c_2 = \frac{(\alpha^{(x)})^2\beta^{\mathcal{I}}_{\max}+\varepsilon\alpha^{(x)}(\mu_f-\beta^{\mathcal{I}}_{\max}\alpha^{(x)})}{2\mu_f\|A\|^2_F}.$$ 
\end{theorem}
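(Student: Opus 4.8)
The plan is to mirror the Bregman-distance contraction argument used for the auxiliary sequence in Theorem~\ref{thm:convergence RABEBK_z}, while carefully accounting for the fact that the primary update in \eqref{eq:rRABEBK_scheme} consumes the \emph{current} auxiliary iterate $z_{k+1}^*$ rather than its exact limit $\hat{z} = b - \hat{y}$. First I would apply inequality \eqref{eq:alpha-strongly convex} for $f^*$ together with $x_k = \nabla f^*(x_k^*)$ to obtain the one-step estimate
$$D_f^{x_{k+1}^*}(x_{k+1},\hat{x}) \le D_f^{x_k^*}(x_k,\hat{x}) + \langle x_{k+1}^* - x_k^*,\, x_k - \hat{x}\rangle + \frac{1}{2\mu_f}\|x_{k+1}^* - x_k^*\|_2^2,$$
exactly as in the derivation leading to \eqref{eq:conver_z_const}. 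Substituting the matrix form of the primary update from Algorithm~\ref{alg:cRABEBK}, I would use $A_{\mathcal{I}_{i_k},:}\hat{x} = \hat{y}_{\mathcal{I}_{i_k}}$ and the identity $\hat{y} = b - \hat{z}$ to split the block residual as $A_{\mathcal{I}_{i_k},:}x_k - b_{\mathcal{I}_{i_k}} + z^*_{k+1,\mathcal{I}_{i_k}} = s_k + e_k$, where $s_k := A_{\mathcal{I}_{i_k},:}(x_k - \hat{x})$ is the ideal residual driving convergence and $e_k := z^*_{k+1,\mathcal{I}_{i_k}} - \hat{z}_{\mathcal{I}_{i_k}}$ is the perturbation inherited from the auxiliary sequence.

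The second step is to bound the quadratic term via $\|(A_{\mathcal{I}_{i_k},:})^\top(s_k+e_k)\|_2^2 \le \sigma_{\max}^2(A_{\mathcal{I}_{i_k},:})\|s_k+e_k\|_2^2$ and the definition of $\beta^{\mathcal{I}}_{\max}$ in \eqref{eq:25-23-2}, then to expand both the inner-product and quadratic contributions into $\|s_k\|_2^2$, $\|e_k\|_2^2$, and the cross term $\langle s_k, e_k\rangle$. Because $e_k$ has an indeterminate sign, the crucial device is Young's inequality $|\langle s_k, e_k\rangle| \le \tfrac{1}{2\varepsilon}\|s_k\|_2^2 + \tfrac{\varepsilon}{2}\|e_k\|_2^2$ with free parameter $\varepsilon > 0$; this is precisely the mechanism producing the $\tfrac{1}{\varepsilon}$ and $\varepsilon$ terms in $c_1$ and $c_2$. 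Collecting coefficients (over the per-block denominator $\|A_{\mathcal{I}_{i_k},:}\|_F^2$) yields a $\|s_k\|_2^2$-factor matching $c_1$ and an $\|e_k\|_2^2$-factor matching $c_2$; the restriction $0 < \alpha^{(x)} \le \mu_f/\beta^{\mathcal{I}}_{\max}$ guarantees $\mu_f - \beta^{\mathcal{I}}_{\max}\alpha^{(x)} \ge 0$, so the splitting is legitimate and $c_1 > 0$ for suitably large $\varepsilon$.

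Third, I would take the conditional expectation over the random block $i_k$. The sampling probability $\|A_{\mathcal{I}_{i_k},:}\|_F^2/\|A\|_F^2$ cancels the per-block denominator, and since $\{\mathcal{I}_{i_k}\}$ partitions $[m]$ the expectations telescope into $\mathbb{E}[\|s_k\|_2^2/\|A_{\mathcal{I}_{i_k},:}\|_F^2] = \|Ax_k - \hat{y}\|_2^2/\|A\|_F^2$ and $\mathbb{E}[\|e_k\|_2^2/\|A_{\mathcal{I}_{i_k},:}\|_F^2] = \|z_{k+1}^* - \hat{z}\|_2^2/\|A\|_F^2$, converting the per-block constants into the stated $c_1, c_2$ (now carrying $\|A\|_F^2$). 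Applying the global error bound of Theorem~\ref{thm:theta} to the feasible problem \eqref{eq:BK02}, with its error-bound constant $\gamma(\hat{x})$, gives $\|Ax_k - \hat{y}\|_2^2 \ge \gamma(\hat{x})\, D_f^{x_k^*}(x_k,\hat{x})$ and hence the contraction factor $1 - c_1\gamma(\hat{x})$. Finally, bounding $\mathbb{E}[\|z_{k+1}^* - \hat{z}\|_2^2] = \mathbb{E}[\|z_{k+1}^* - (b-\hat{y})\|_2^2]$ by the geometric estimate \eqref{eq:convergence of zk_general} from Theorem~\ref{thm:convergence RABEBK_z} produces the second, vanishing term of \eqref{eq:thm_x_convergence_general}.

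I anticipate that the main obstacle is the two-way coupling between the primary and auxiliary iterates: unlike the self-contained analysis of $\{z_k^*\}$, the descent for $\{x_k\}$ cannot be closed in $D_f^{x_k^*}(x_k,\hat{x})$ alone, since the update consumes the still-converging $z_{k+1}^*$. The delicate bookkeeping is therefore the Young-inequality balance, namely choosing $\varepsilon$ so that the $\|s_k\|_2^2$ leakage does not overwhelm the genuine descent (keeping $c_1 > 0$) while keeping the perturbation coefficient $c_2$ controlled. The resulting recursion is a contraction in $D_f^{x_k^*}(x_k,\hat{x})$ driven by an exogenous, geometrically decaying forcing term; unrolling it (a standard argument for such perturbed linear recurrences) then yields the claimed expected linear convergence of $\{x_k\}$ to $\hat{x}$.
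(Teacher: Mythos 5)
Your proposal is correct and follows essentially the same route as the paper's proof: the one-step Bregman estimate via the smoothness of $f^*$, the decomposition of the block residual $v_k$ into the ideal part $A_{\mathcal{I}_{i_k},:}(x_k-\hat{x})$ plus the perturbation $z^*_{k+1,\mathcal{I}_{i_k}}-\hat{z}_{\mathcal{I}_{i_k}}$, the $\beta^{\mathcal{I}}_{\max}$ bound on the quadratic term, Young's inequality with free parameter $\varepsilon$ producing exactly the stated $c_1$ and $c_2$, expectation over the block sampling to cancel the per-block Frobenius norms, the global error bound with constant $\gamma(\hat{x})$, and the geometric estimate from Theorem~\ref{thm:convergence RABEBK_z} for the forcing term. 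No substantive differences.
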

\begin{proof}
Let $v_k := A_{\mathcal{I}_{i_k},:} x_k - b_{\mathcal{I}_{i_k}} + z^{*}_{k+1,\mathcal{I}_{i_k}}
$. Since $\nabla f^*$ is $1/\mu_f$-Lipschitz continuous,  from the definition of the Bregman distance \eqref{eq:Bregman distance} and the smoothness bound \eqref{eq:alpha-strongly convex} of $f^*$, we obtain
\[
f^*(x_k^* + s) \le f^*(x_k^*) + \langle \nabla f^*(x_k^*), s \rangle + \frac{1}{2\mu_f}\|s\|_2^2
\quad\text{for all } s\in\mathbb{R}^n.
\]
Taking \(s=-\dfrac{\alpha^{(x)}}{\|A_{\mathcal{I}_{i_k},:}\|_F^2}(A_{\mathcal{I}_{i_k},:})^\top v_k\) yields the following inequality 
\begin{align}
   D_{f}^{x_{k+1}^{*}}(x_{k+1},\hat{x})&=f^{*}(x_{k+1}^{*})- \langle x_{k+1}^{*},\hat{x} \rangle +f(\hat{x}) \notag \\
   &=f^*\left(x_k^*-\frac{\alpha^{(x)}}{\|A_{\mathcal{I}_{i_k},:}\|^2_F}(A_{\mathcal{I}_{i_k},:})^\top v_k\right)-\langle x_k^*,\hat{x}\rangle+\left\langle \frac{\alpha^{(x)}}{\|A_{\mathcal{I}_{i_k},:}\|^2_F}(A_{\mathcal{I}_{i_k},:})^\top v_k, \hat{x}\right\rangle +f(\hat{x})\notag\\
   &\leq f^{*}(x_{k}^{*}) +\langle \nabla f^{*}(x^*_k),-\frac{\alpha^{(x)}}{\|A_{\mathcal{I}_{i_k},:}\|^2_F}(A_{\mathcal{I}_{i_k},:})^\top v_k\rangle + \frac{1}{2\mu_f}\left\|-\frac{\alpha^{(x)}}{\|A_{\mathcal{I}_{i_k},:}\|^2_F}(A_{\mathcal{I}_{i_k},:})^\top v_k  \right\|^2_2 \notag\\ 
   &\quad-\langle x^*_{k},\hat{x}\rangle+f(\hat{x})+\left\langle \frac{\alpha^{(x)}}{\|A_{\mathcal{I}_{i_k},:}\|^2_F} v_k, A_{\mathcal{I}_{i_k},:}\hat{x}\right\rangle.\label{eq:converge_x_const2}
\end{align}
Using the identities $x_k = \nabla f^{*}(x^*_k)$ and $A_{\mathcal{I}_{i_k}} \hat{x} = \hat{y}_{\mathcal{I}_{i_k}}$, and noting that  $\beta^{\mathcal{I}}_{\max} = \max_{i_k \in [s]} \frac{\sigma^2_{\max} \left( A_{\mathcal{I}_{i_k},:} \right)}{\| A_{\mathcal{I}_{i_k},:} \|^2_F}$, the inequality \eqref{eq:converge_x_const2} reaches
\begin{align}\label{eq:converge_x_const}
    D_{f}^{x_{k+1}^{*}}(x_{k+1},\hat{x})& \leq  D_{f}^{x_{k}^{*}}(x_{k},\hat{x}) - \frac{\alpha^{(x)}}{\|A_{\mathcal{I}_{i_k},:}\|^2_F}\left\langle  v_k, A_{\mathcal{I}_{i_k},:}x_k - \hat{y}_{\mathcal{I}_{i_k}}\right\rangle + \frac{1}{2\mu_f}\frac{(\alpha^{(x)})^2}{\|A_{\mathcal{I}_{i_k},:}\|^4_F}\|(A_{\mathcal{I}_{i_k},:})^\top v_k\|^2_2   \\
   &  \leq  D_{f}^{x_{k}^{*}}(x_{k},\hat{x}) - \frac{\alpha^{(x)}}{\|A_{\mathcal{I}_{i_k},:}\|^2_F}\left\langle  v_k, A_{\mathcal{I}_{i_k},:}x_k - \hat{y}_{\mathcal{I}_{i_k}}\right\rangle + \frac{1}{2\mu_f}\frac{(\alpha^{(x)})^2}{\|A_{\mathcal{I}_{i_k},:}\|^4_F}\sigma_{\max}^2(A_{\mathcal{I}_{i_k},:})\| v_k\|^2_2 \notag\\
   & \leq D_{f}^{x_{k}^{*}}(x_{k},\hat{x}) - \frac{\alpha^{(x)}}{\|A_{\mathcal{I}_{i_k},:}\|^2_F}\left\langle  v_k, A_{\mathcal{I}_{i_k},:}x_k - \hat{y}_{\mathcal{I}_{i_k}}\right\rangle + \frac{1}{2\mu_f}\frac{(\alpha^{(x)})^2}{\|A_{\mathcal{I}_{i_k},:}\|^2_F}\beta^{\mathcal{I}}_{\max} \| v_k\|^2_2.\label{eq:converge_x_const4}
\end{align}
Using
\begin{equation*}
   \begin{split}
       \left\langle  v_k, A_{\mathcal{I}_{i_k},:}x_k - \hat{y}_{\mathcal{I}_{i_k}}\right\rangle         &=\left\langle A_{\mathcal{I}_{i_k},:} x_k - b_{\mathcal{I}_{i_k}} + z^{*}_{k+1,\mathcal{I}_{i_k}} -\hat{y}_{\mathcal{I}_{i_k}}+\hat{y}_{\mathcal{I}_{i_k}}, A_{\mathcal{I}_{i_k},:}x_k - \hat{y}_{\mathcal{I}_{i_k}}\right\rangle \\
       &=\|A_{\mathcal{I}_{i_k},:}x_k - \hat{y}_{\mathcal{I}_{i_k}}\|^2_2+\left\langle \hat{y}_{\mathcal{I}_{i_k}} - b_{\mathcal{I}_{i_k}} + z^{*}_{k+1,\mathcal{I}_{i_k}}, A_{\mathcal{I}_{i_k},:}x_k - \hat{y}_{\mathcal{I}_{i_k}}\right\rangle,
   \end{split}
\end{equation*}
and 
\begin{equation} \label{eq:convergence_vk}
   \|v_k\|^2_2 = \|A_{\mathcal{I}_{i_k},:}x_k - \hat{y}_{\mathcal{I}_{i_k}}\|^2_2 + 2 \left\langle \hat{y}_{\mathcal{I}_{i_k}} - b_{\mathcal{I}_{i_k}} + z^{*}_{k+1,\mathcal{I}_{i_k}}, A_{\mathcal{I}_{i_k},:}x_k - \hat{y}_{\mathcal{I}_{i_k}}\right\rangle + \|z^*_{k+1,\mathcal{I}_{{i_k}}}-(b_{\mathcal{I}_{i_k}}-\hat{y}_{\mathcal{I}_{i_k}})\|^2_2,
\end{equation}
the inequality \eqref{eq:converge_x_const4} becomes
\begin{equation}\label{eq:preconvergence of x}
\begin{split}
   D_{f}^{x_{k+1}^{*}}(x_{k+1},\hat{x}) \leq D_{f}^{x_{k}^{*}}(x_{k},\hat{x}) &- \frac{\alpha^{(x)}(2\mu_f-\beta^{\mathcal{I}}_{\max}\alpha^{(x)})}{2\mu_f\|A_{\mathcal{I}_{i_k},:}\|^2_F}\left\|A_{\mathcal{I}_{i_k},:}x_k - \hat{y}_{\mathcal{I}_{i_k}}\right\|^2_2 \\
   &-\frac{\alpha^{(x)}(\mu_f-\beta^{\mathcal{I}}_{\max}\alpha^{(x)})}{\mu_f\|A_{\mathcal{I}_{i_k},:}\|^2_F}\left\langle \hat{y}_{\mathcal{I}_{i_k}} - b_{\mathcal{I}_{i_k}} + z^*_{k+1,\mathcal{I}_{{i_k}}},A_{\mathcal{I}_{i_k},:}x_k - \hat{y}_{\mathcal{I}_{i_k}}\right\rangle \\
&+\frac{(\alpha^{(x)})^2\beta^{\mathcal{I}}_{\max}}{2\mu_f\|A_{\mathcal{I}_{i_k},:}\|_F^2}\|z^*_{k+1,\mathcal{I}_{{i_k}}}-(b_{\mathcal{I}_{i_k}}-\hat{y}_{\mathcal{I}_{i_k}})\|^2_2.
\end{split} 
\end{equation}
Note that for any $\varepsilon >0$, we have
\begin{equation} \label{eq:Du2020_2.5}
\begin{split}
   2\left\langle \hat{y}_{\mathcal{I}_{i_k}} - b_{\mathcal{I}_{i_k}} + z^{*}_{k+1,\mathcal{I}_{i_k}}, A_{\mathcal{I}_{i_k},:}x_k - \hat{y}_{\mathcal{I}_{i_k}}\right\rangle \geq &-\varepsilon \|\hat{y}_{\mathcal{I}_{i_k}} - b_{\mathcal{I}_{i_k}} + z^*_{k+1,\mathcal{I}_{{i_k}}}\|^2_2 \\
   &-\frac{1}{\varepsilon}\|A_{\mathcal{I}_{i_k},:}x_k - \hat{y}_{\mathcal{I}_{i_k}}\|_2^2.
\end{split}
\end{equation}
Combining inequality \eqref{eq:preconvergence of x} and \eqref{eq:Du2020_2.5} yields
\begin{equation}\label{eq:conver_z_const5} 
   \begin{split}
       D_{f}^{x_{k+1}^{*}}(x_{k+1},\hat{x}) \leq D_{f}^{x_{k}^{*}}(x_{k},\hat{x}) &- \left( \frac{\alpha^{(x)}(2\mu_f-\beta^{\mathcal{I}}_{\max}\alpha^{(x)})}{2\mu_f\|A_{\mathcal{I}_{i_k},:}\|^2_F}-\frac{\frac{1}{\varepsilon}\alpha^{(x)}(\mu_f-\beta^{\mathcal{I}}_{\max}\alpha^{(x)})}{2\mu_f\|A_{\mathcal{I}_{i_k},:}\|^2_F}\right)\left\|A_{\mathcal{I}_{i_k},:}x_k - \hat{y}_{\mathcal{I}_{i_k}}\right\|^2_2\\
       &+\left( \frac{(\alpha^{(x)})^2\beta^{\mathcal{I}}_{\max}}{2\mu_f\|A_{\mathcal{I}_{i_k},:}\|^2_F}+\frac{\varepsilon\alpha^{(x)}(\mu_f-\beta^{\mathcal{I}}_{\max}\alpha^{(x)})}{2\mu_f\|A_{\mathcal{I}_{i_k},:}\|^2_F}\right)\|z^*_{k+1,\mathcal{I}_{{i_k}}}-(b_{\mathcal{I}_{i_k}}-\hat{y}_{\mathcal{I}_{i_k}})\|^2_2,
   \end{split}
\end{equation}
whenever $\mu_f \geq \alpha^{(x)}\beta_{\max}^{\mathcal{I}}$. 
Taking the conditional expectation over the first $k$ iterations on both sides of \eqref{eq:conver_z_const5} and using linearity of expectation, we have
\begin{equation} \label{eq:convergence_x_general_last}
   \mathbb{E}\left[D_{f}^{x_{k+1}^{*}}(x_{k+1},\hat{x})\right]\leq \mathbb{E}\left[D_{f}^{x_{k}^{*}}(x_{k},\hat{x})\right] - c_1 \cdot \mathbb{E}\left[\|Ax_k - \hat{y}\|_2^2\right]+c_2\cdot \mathbb{E}\left[\|z_{k+1}^* - (b-\hat{y})\|_2^2\right].
\end{equation}

It follows from the inequality \eqref{eq:global error bound} in Theorem \ref{thm:theta} that $D_{f}^{x_{k}^{*}}(x_{k},\hat{x})\leq \frac{1}{\gamma(\hat{x})}\|Ax_k-\hat{y}\|_2^2$, therefore
\begin{equation}
\begin{split}
   \mathbb{E}\left[D_{f}^{x_{k+1}^{*}}(x_{k+1},\hat{x})\right]&\leq \left(1-c_1\cdot\gamma(\hat{x})\right)\mathbb{E}\left[D_{f}^{x_{k}^{*}}(x_{k},\hat{x})\right] +c_2\cdot \mathbb{E}\left[\|z_{k+1}^* - (b-\hat{y})\|_2^2\right] \\
   &\leq \left(1-c_1\cdot\gamma(\hat{x})\right)\mathbb{E}\left[D_{f}^{x_{k}^{*}}(x_{k},\hat{x})\right] \\ &\quad+c_2\cdot \frac{2L_g}{\mu_g}\left( 1 - \frac{\alpha_k^{(z)}(2\mu_g -\beta_{\max}^{\mathcal{J}}\alpha_k^{(z)})\theta(\hat{z}) }{2\mu_g \| A \|_F^2} \right)^{k+1}D_{h}^{\tilde{z}^{*}_0}(z_0, \hat{z}),
\end{split}
\end{equation}
where the second inequality follows from \eqref{eq:convergence of zk_general}. 

Since $c_2$ is a constant and the second term tends to $0$ as $k \to \infty$, the iterates $\{x_k\}$ converge in expectation to the unique solution $\hat{x}$. Hence, the proof is complete.
\end{proof}

\begin{remark} \label{remark:xk_general}
 The relaxation parameter \(\alpha^{(x)}\) governs the step size taken
along the averaged Bregman-Kaczmarz update direction for the primary variable. From \eqref{eq:thm_x_convergence_general}, the linear convergence rate of
the primary sequence \(\{x_k\}\) with relaxation parameter \(\alpha^{(x)}\) is
\begin{equation} \label{eq:11-8-1}
1-\frac{\alpha^{(x)}(2\mu_f-\beta^{\mathcal{I}}_{\max}\alpha^{(x)})-\frac{1}{\varepsilon}\alpha^{(x)}(\mu_f-\beta^{\mathcal{I}}_{\max}\alpha^{(x)})}{2\mu_f\|A\|^2_F}\cdot \gamma(\hat{x}),
\end{equation}
for any $\varepsilon>0$.  When \(\varepsilon\) is taken sufficiently large, the dominant term controlling the rate becomes
\(\alpha^{(x)}(2\mu_f - \beta^{\mathcal{I}}_{\max}\alpha^{(x)})\),
which attains its maximum at
 \begin{equation} \label{eq:11-23-1}
    \alpha^{(x)} = \frac{\mu_f}{\beta^{\mathcal{I}}_{\max}}.
\end{equation}
At this choice, the additional term
\(\frac{1}{\varepsilon}\alpha^{(x)}(\mu_f - \beta^{\mathcal{I}}_{\max}\alpha^{(x)})\)
vanishes, and the contraction factor \eqref{eq:11-8-1} simplifies to
\begin{equation} \label{eq:11-6-1}
    1 - \frac{\mu_f}{2\,\beta^{\mathcal{I}}_{\max}\,\|A\|_F^2}\cdot \gamma(\hat{x}).
\end{equation}
Hence, the choice \eqref{eq:11-23-1}  yields the optimal contraction factor for the primary sequence in expectation.
\end{remark}

 In particular, when considering problem \eqref{eq:main_partical} with the objective function
\[
f(x) = \lambda \|x\|_1 + \frac{1}{2} \|x\|_2^2,
\]
the quantity $\gamma(\hat{x})$ admits an explicit expression, as shown in \eqref{eq:theta better form}. Therefore, for the sparse least‐squares problem, the proposed method yields the following explicit estimate of the convergence factor:
\[
1 - \frac{\mu_f}{2 \beta^{\mathcal{I}}_{\max} \|A\|_F^2} \cdot \frac{1}{\tilde{\sigma}_{\min}^2(A)} \cdot \frac{|\hat{x}|_{\min} + 2\lambda}{|\hat{x}|_{\min}}.
\]

It is worth noting that the dominant convergence rate of the proposed RABEBK method is given by \eqref{eq:11-6-1}, whereas that of the REBK method is
    $$1-\frac{\mu_f}{2\|A\|^2_F }\cdot \gamma(\hat{x}).$$ 
By \eqref{eq:25-23-2}, we have $\beta^{\mathcal{I}}_{\max} < 1$. Consequently, the dominant contraction factor of the proposed rRABEBK method is strictly smaller than that of the REBK method \cite{Schpfer2022ExtendedRK}. This establishes that our rRABEBK method achieves a provably faster theoretical convergence rate than REBK.


\section{Numerical experiments} \label{sec:numerical experiments}
This section presents a set of numerical experiments to assess the efficiency and robustness of our rRABEBK method. We focus on sparse least-squares problems, a representative setting that highlights the ability of methods to handle both data inconsistency and sparsity-promoting regularization. 

In the considered experiments, the objective function is given by
\[
f(x) = \lambda \|x\|_{1} + \frac{1}{2}\|x\|_{2}^{2},
\]
and the data fidelity term is modeled as
\[
g^{*}(b - y) = \frac{1}{2}\|b - y\|_{2}^{2}.
\]
Under this setting, the gradient mappings in Algorithm~\ref{alg:cRABEBK} admit simple closed-form expressions. In Step~5, since 
\( \nabla g^{*}(z_k^{*}) = z_k^{*} \), we directly obtain \( z_{k+1} = z_{k+1}^{*} \); in Step~8, the primal  update reduces to the proximal operation
\[
x_{k+1} = S_{\lambda}(x_{k+1}^{*}),
\]
where \( S_{\lambda}(\cdot) \) denotes the soft-thresholding operator.

The sparse solution $\hat{x} \in \mathbb{R}^n$ is generated as a random vector with $s$ nonzero entries drawn from the standard normal distribution. We set the sparsity parameter to $\lambda = 5$, and choose the number of nonzero entries of exact solution as  $ s =\lceil 0.01\,n\rceil$. The corresponding right-hand side is given by
$$
b = A\hat{x} + e,
$$
where $e \in \mathcal{N}(A^\top)$. 
Such a vector $e$ can be obtained via the \texttt{MATLAB} function \verb|null|. 
Specifically, we construct the noise as $e = H v$, where the columns of $H$ form an orthonormal basis of $\mathcal{N}(A^\top)$, and $v$ is a random vector sampled uniformly from the sphere $\partial B_\rho(0)$ of radius 
$$
\rho = q \| \hat{b} \|_2,
\qquad \hat{b} = A\hat{x},
$$
with $q$ denoting the relative noise level. 
In our experiments, we set $q = 5$.

For all algorithms, we use the initialization $z_0=b$ and $x_0^*=0$. Each algorithm is terminated when either the iteration count exceeds $5\times 10^6$ or the relative error (ERR), which is defined as 
$$ \mathrm{ERR}=\frac{\left\|x_{k}-\hat{x}\right\|_2}{\|\hat{x}\|_2},$$ falls below $10^{-5}$.

Our algorithm relies on a block structure, let $\tau$ denote the block size. We assume that the subsets $\{\mathcal{I}_i\}_{i=1}^{s-1}$ and $\{\mathcal{J}_j\}_{j=1}^{t-1}$ have the same size $\tau$, i.e., $|\mathcal{I}_i| = |\mathcal{J}_j| = \tau$. The row index set $\{1,\ldots,m\}$ is partitioned into $\{\mathcal{I}_i\}_{i=1}^s$:
\begin{equation*}
   \begin{split}
       \mathcal{I}_i &= \{(i-1)\tau + 1,\, (i-1)\tau + 2,\, \ldots,\, i\tau\}, \quad i = 1,2,\ldots,s-1, \\
       \mathcal{I}_s &= \{(s-1)\tau + 1,\, (s-1)\tau + 2,\, \ldots,\, m\}, \qquad |\mathcal{I}_s| \leq \tau.
   \end{split}
\end{equation*}
Similarly, the column index set $\{1,\ldots,n\}$ is partitioned into $\{\mathcal{J}_j\}_{j=1}^t$:
\begin{equation*}
   \begin{split}
       \mathcal{J}_j &= \{(j-1)\tau + 1,\, (j-1)\tau + 2,\, \ldots,\, j\tau\}, \quad j = 1,2,\ldots,t-1,\\
       \mathcal{J}_t &= \{(t-1)\tau + 1,\, (t-1)\tau + 2,\, \ldots,\, n\}, \qquad |\mathcal{J}_t| \leq \tau.
   \end{split}
\end{equation*}
In all experiments, the block size of the proposed rRABEBK method is fixed at 
$\tau = 20$, which provides a reasonable compromise between computational cost 
and convergence efficiency. 

Unless otherwise stated, we set identical relaxation 
parameters for the auxiliary and primary updates, namely,
\[
\alpha^{(z)} = \alpha^{(x)} = \alpha. 
\]

We compare the proposed rRABEBK method with several state-of-the-art Kaczmarz-type algorithms. 
The baseline includes the randomized extended Bregman-Kaczmarz (REBK) method and one of its block extensions. In addition, a randomized averaging block extended Kaczmarz method without the Bregman update step is also included for comparison. The algorithms under consideration are summarized as follows:
\begin{enumerate}
    \item \textbf{REBK} \cite{Schpfer2022ExtendedRK}: the randomized extended Bregman-Kaczmarz method.
    \item \textbf{RABEBK}: the randomized averaging block extended Bregman-Kaczmarz method, corresponding to the proposed rRABEBK scheme with relaxation parameter $\alpha = 1$.
  \item \textbf{RABEK} \cite{du2020randomized}: the randomized averaging block extended Kaczmarz method.
\end{enumerate}
To further investigate the influence of the relaxation parameter,  we consider three variants of the proposed rRABEBK method, which differ only in the choice of $\alpha$. The parameter is scaled relative to the theoretical baseline $1/\beta_{\max}$, where $ \beta_{\max} := \max\bigl(\beta^{\mathcal{I}}_{\max},\,  \beta^{\mathcal{J}}_{\max}\bigr)$: 

\begin{enumerate}
\setcounter{enumi}{3}
    \item \textbf{rRABEBK-1}:  rRABEBK with relaxation parameter $\alpha = 1/\beta_{\max}$.
    \item \textbf{rRABEBK-1.75}:  rRABEBK with relaxation parameter $\alpha = 1.75/\beta_{\max}$.
    \item \textbf{rRABEBK-2.25}:  rRABEBK with relaxation parameter $\alpha = 2.25/\beta_{\max}$.
\end{enumerate}

These comparisons allow us to assess both the baseline performance and the sensitivity of rRABEBK with respect to the relaxation parameter.

All experiments are carried out in \texttt{MATLAB} on a computer equipped with an Intel Core i7 processor and 16\,GB of RAM.

\subsection{Experiments on Bregman Updates and the Necessity of Relaxation}
The goal of this work is to develop an efficient algorithm capable of addressing inconsistent linear systems while simultaneously incorporating prior structural information, such as sparsity in the solution. To this end, we propose the relaxed randomized averaging block extended Bregman-Kaczmarz (rRABEBK) method. The purpose of this experiment is to clarify the respective roles of the
Bregman update and the relaxation parameter in the proposed framework. By comparing the six algorithmic variants introduced at the beginning of this section, we are able to quantify the effect of (i) incorporating Bregman corrections and (ii) introducing a tunable relaxation parameter on convergence speed and reconstruction accuracy in sparse recovery problems.

We consider sparse recovery tasks in which the coefficient matrix is a Bernoulli matrix of size $500 \times 1000$, whose entries are independently sampled from the symmetric Bernoulli distribution taking values in $\{+1,-1\}$ with equal probability. Such matrices are widely used in compressive sensing due to their binary structure and favourable incoherence properties with respect to canonical sparsifying transforms such as the Fourier or wavelet basis. These matrices satisfy the Restricted Isometry Property (RIP) with high probability \cite{candes2006near}, thus providing a reliable testbed for evaluating sparse reconstruction algorithms.

\begin{figure}[!htbp]
    \centering
    \includegraphics[width=0.8\linewidth]{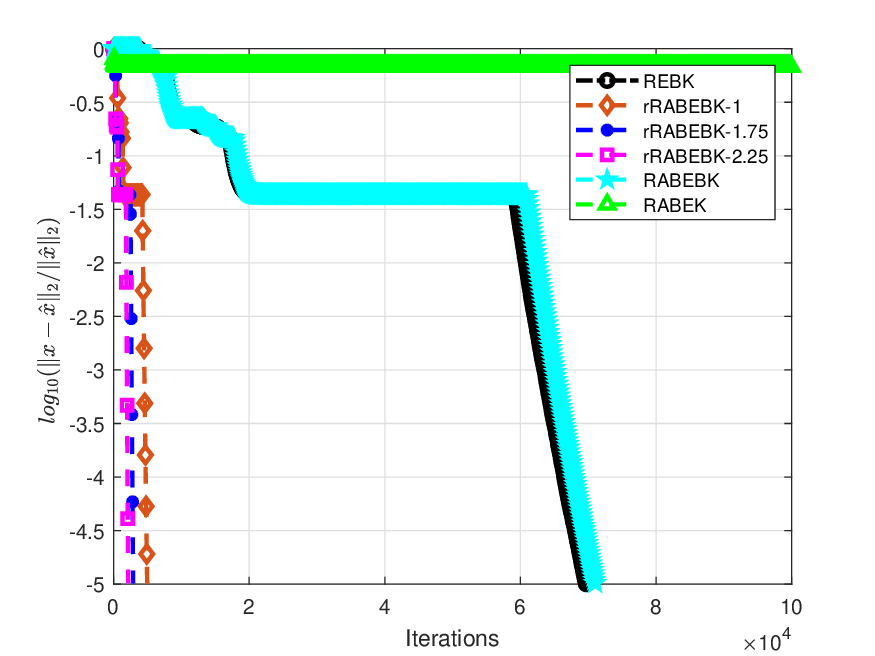}
    \caption{Convergence histories of REBK, RABEK, RABEBK, and the relaxed rRABEBK variants 
for a Bernoulli coefficient matrix of size $500\times 1000$.}
    \label{fig:ber}
\end{figure}

Figure \ref{fig:ber} reports the convergence curves of all six algorithms under
comparison. Several key observations can be made.

\emph{(i) Necessity of the Bregman update.}
The randomized averaging block extended Kaczmarz (RABEK) method \cite{du2020randomized}, which does not incorporate the Bregman update, fails to converge to the true sparse signal. As seen in Figure \ref{fig:ber}, the method stagnates at a relatively large error level. This behaviour is expected: without the soft-thresholding operator $S_\lambda = \nabla f^{*}$ induced by the Bregman update, the iterates naturally approach the minimum-norm solution of the underdetermined linear system rather than a sparse minimizer. This clearly demonstrates that the Bregman correction is indispensable for promoting sparsity in the reconstruction process.

\emph{(ii) Necessity of relaxation.} Although the unrelaxed block Bregman method RABEBK (corresponding to rRABEBK with $\alpha = 1$) converges to the correct sparse solution, its convergence speed is noticeably slower than that of the relaxed variants. This indicates that appropriate relaxation is crucial to  exploit the benefits of block updates and to accelerate convergence.

\emph{(iii) Effectiveness of the proposed relaxed variants.}
The relaxed variants rRABEBK-1, rRABEBK-1.75, and rRABEBK-2.25 deliver markedly faster convergence than both REBK and RABEBK. In particular, a moderately aggressive relaxation (e.g., $\alpha = 1.75/\beta_{\max}$) provides the most consistent acceleration, while a more aggressive choice (e.g., $\alpha = 2.25/\beta_{\max}$) can further speed up convergence but may introduce mild oscillations in the early iterations. These results confirm that the relaxation parameter provides an effective mechanism for balancing convergence speed and numerical stability.

In summary, Figure \ref{fig:ber} demonstrates that both components, the Bregman update and an appropriately chosen relaxation parameter, are essential for achieving fast and robust sparse recovery in inconsistent systems. Consequently, in the subsequent experiments we focus on the relaxed variants of RABEBK and no longer include RABEK and the unrelaxed RABEBK in the comparisons, as they are demonstrably less effective in this setting.

\subsection{Numerical Experiments with Gaussian Random Matrices}
In this experiment, we evaluate the performance of the proposed rRABEBK method on sparse least-squares problems with Gaussian coefficient matrices, whose entries are independently sampled from the standard normal distribution using the \texttt{MATLAB} function \texttt{randn}. Compared with Bernoulli matrices, Gaussian matrices exhibit better incoherence and more favourable spectral concentration, making them a canonical benchmark for compressed sensing and randomized numerical linear algebra. Our goal is to investigate the convergence behaviour and robustness of the rRABEBK framework under these well-conditioned settings.

For the six problem sizes under consideration, Table~\ref{table:Guassian} reports the number of iterations (IT) and the CPU time in seconds (CPU) required by REBK as well as by the proposed rRABEBK variants equipped with different relaxation parameters.

\begin{table}[!htbp] 
\scriptsize 
\centering 
\caption{Numerical performance of REBK and rRABEBK variants on sparse least-squares problems with Gaussian coefficient matrices.}\label{table:Guassian}  
\begin{tabular}{|c|c|c|c|c|c|c|c|c|c|}
\hline
\multirow{2}{*} {$m\times n$}
&\multicolumn{2}{c}{REBK} &\multicolumn{2}{|c}{rRABEBK-1} &\multicolumn{2}{|c}{rRABEBK-1.75} &\multicolumn{2}{|c|}{rRABEBK-2.25} \\ 
\cline{2-9} 
& IT &CPU  &IT &CPU &IT &CPU &IT &CPU \\ 
\hline  
$1000\times 500$   &	 90624&	 4.525&	  6795&	1.610&	 3948&	1.123  &	 3203&	0.815\\  
\hline  
$500\times 1000$   &	 55189&	 2.378&	 	 4402&	 0.894&	 2335&	0.446 &	 1616&	0.330  \\ 
\hline  
$2000\times 1000$  &	 331375&	154.141 &	 	 22066&	16.987&	 11689&	8.956 &	 9937&	7.616  \\  
\hline  
$1000\times 2000$   &	 698962&	 571.494&	 	 46252&	81.566&	 25219&	46.033 &	 17920&	28.517  \\  
\hline  
$4000\times 2000$   &	 195094&	 977.846&	 	 11870&	64.467&	 6175&	32.204 &	 7944&	41.848  \\   
\hline  
$2000\times 4000$   &	 906598&	 3744.716&	 	 55125&	267.296&	 36599&	 188.740 & 26475&	 146.569 \\   
\hline  
\end{tabular}  
\end{table} 

Table \ref{table:Guassian} demonstrates the clear advantages of the rRABEBK methods over the REBK algorithm across all dimensions considered. In every test case, all rRABEBK variants reduce both the iteration count and CPU time by substantial margins, often by an order of magnitude. This acceleration can be attributed to two key components:  (i) the averaging block update, which incorporates information from multiple hyperplanes at each iteration; and  (ii) the relaxation strategy, which effectively amplifies the descent direction and enhances the overall contraction behaviour.

A consistent trend observed in the results is that larger relaxation parameters (e.g., rRABEBK-1.75 and rRABEBK-2.25) generally lead to faster convergence than rRABEBK-1. For example, in the $1000 \times 2000$ case, the CPU time drops from $81.566$ seconds (rRABEBK-1) to $46.033$ seconds (rRABEBK-1.75), and further to $28.517$ seconds (rRABEBK-2.25). This confirms the benefit of tuning the relaxation factor to strengthen the projection step and accelerate error reduction.

However, the performance does not always improve monotonically with increasing relaxation. A representative instance appears in the $4000 \times 2000$ test: rRABEBK-1.75 achieves the best performance with a CPU time of $32.204$ seconds, outperforming rRABEBK-2.25, which takes $41.848$ seconds. This behaviour is consistent with the theoretical observation that overly aggressive relaxation may lead to overshooting, slightly reducing efficiency in certain problem settings.

\begin{figure}[!htbp] 
\centering
	\subfigure[$m=1000,n=500$]   
	{
		\begin{minipage}[t]{0.31\linewidth}
			\centering
			\includegraphics[width=1\textwidth]{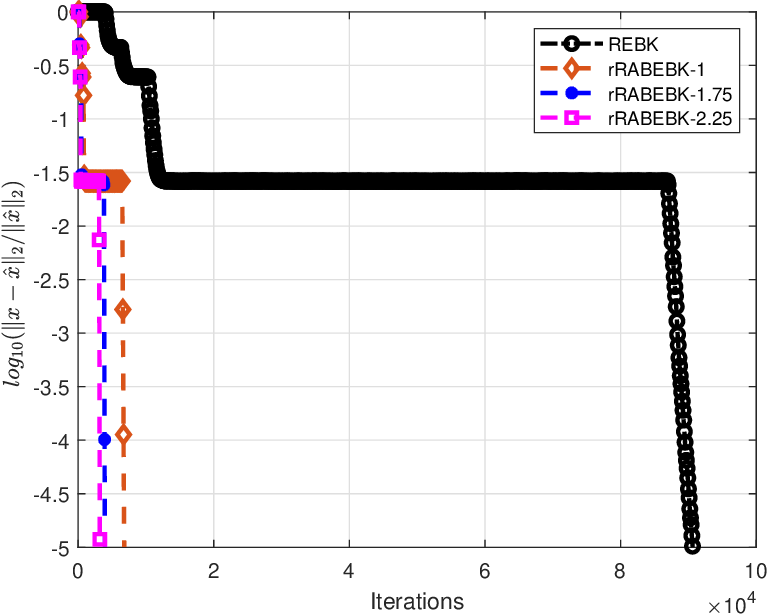}
		\end{minipage}
	}
	\subfigure[$m=2000,n=1000$]  
	{
		\begin{minipage}[t]{0.31\linewidth}
			\centering
			\includegraphics[width=1\textwidth]{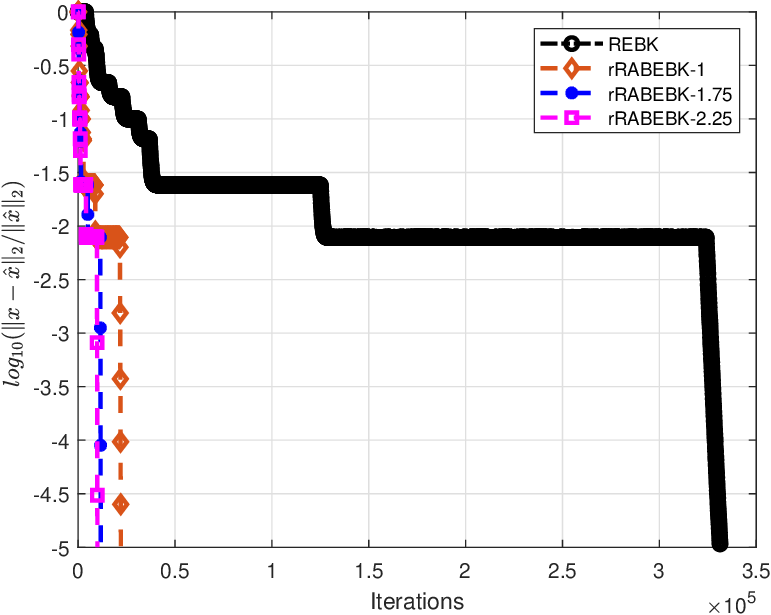}
		\end{minipage}
    }
    \subfigure[$m=4000,n=2000$]   
	{
		\begin{minipage}[t]{0.31\linewidth}
			\centering
			\includegraphics[width=1\textwidth]{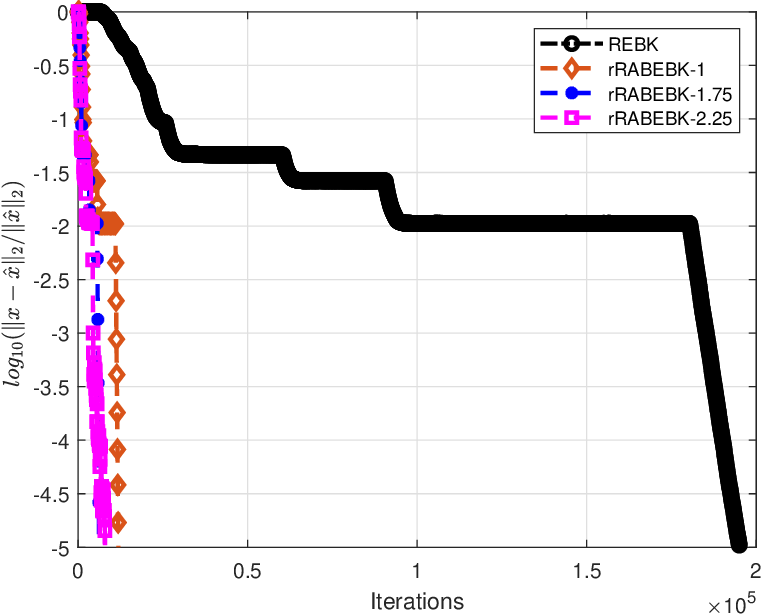}
		\end{minipage}
	}

	\subfigure[$m=500,n=1000$]   
	{
		\begin{minipage}[t]{0.31\linewidth}
			\centering
			\includegraphics[width=1\textwidth]{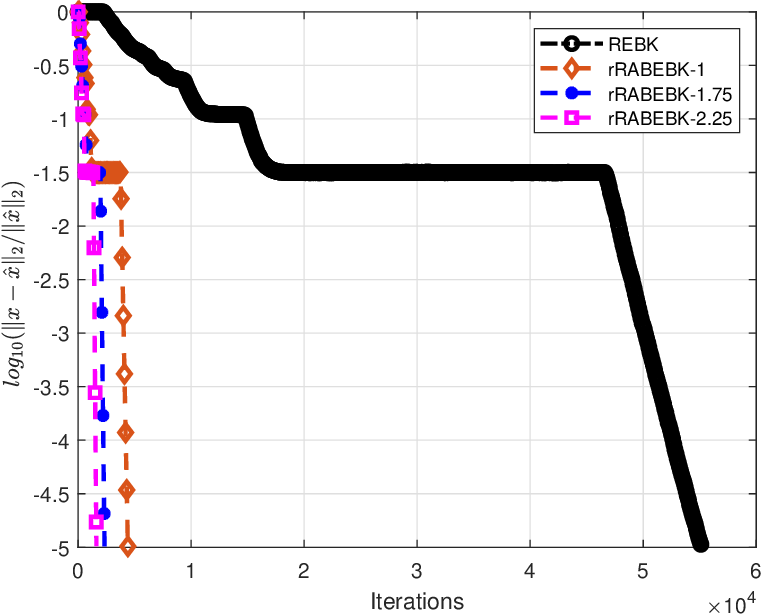}
		\end{minipage}
	}
	\subfigure[$m=1000,n=2000$]  
	{
		\begin{minipage}[t]{0.31\linewidth}
			\centering
			\includegraphics[width=1\textwidth]{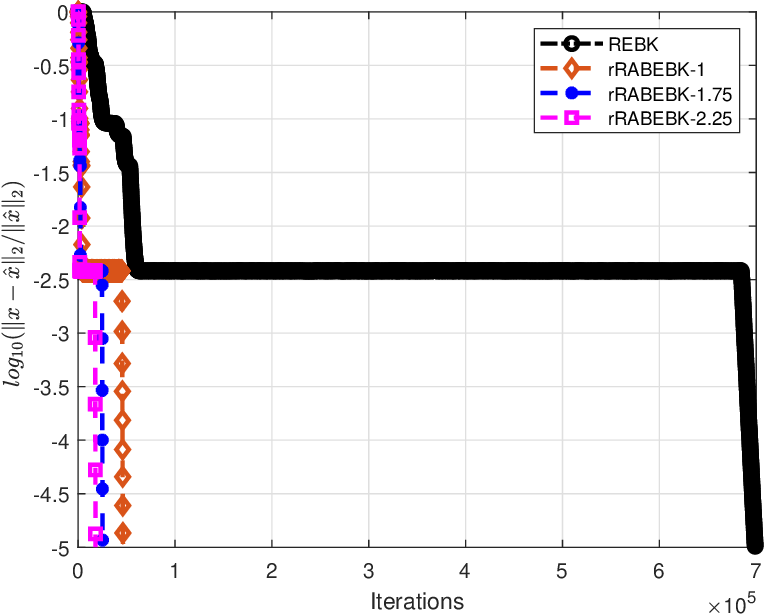}
		\end{minipage}
    }
    \subfigure[$m=2000,n=4000$]   
	{
		\begin{minipage}[t]{0.31\linewidth}
			\centering
			\includegraphics[width=1\textwidth]{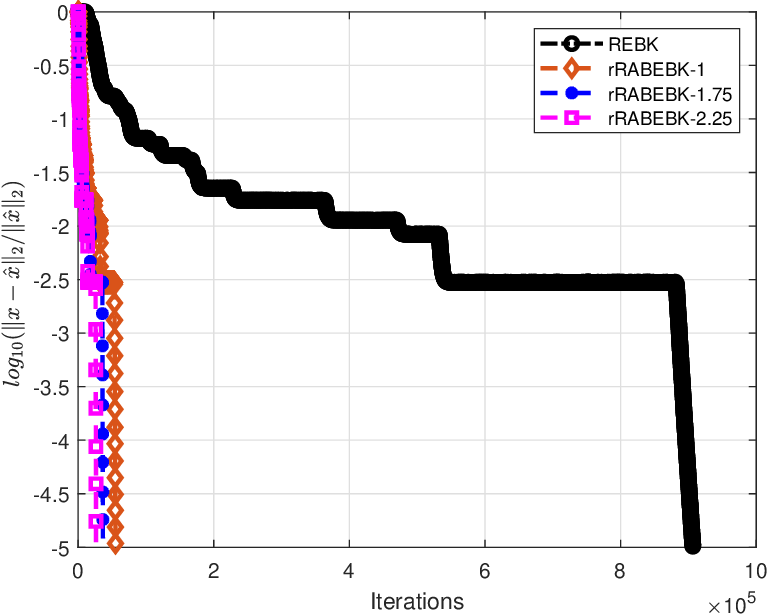}
		\end{minipage}
	}

\caption{Relative error as a function of the iteration count for Gaussian test matrices. The top panel reports results for overdetermined systems, whereas the bottom panel presents those for underdetermined systems.} 	
		\label{fig:Guassian}
\end{figure}
Figure \ref{fig:Guassian} plots the relative error against the iteration count for REBK and the proposed rRABEBK variants applied to Gaussian matrices of six different sizes.  Across all problem dimensions, the rRABEBK methods exhibit markedly faster convergence than REBK, confirming the performance gap already seen in Table \ref{table:Guassian}.

\subsection{Numerical Experiments on Structured Random Matrices} \label{subsubsub:rank-deficient}
This experiment is devoted to evaluating the performance of the proposed methods on structured random matrices, which exhibit nonuniform row norms and correlation patterns that more closely resemble those arising in practical large-scale applications. The structured random matrices are  generated following the procedure described in \cite{du2020randomized}. For completeness, we briefly summarize the construction procedure below.

Given $m$, $n$, a target rank $r = \textrm{rank}(A)$, and a condition number $\kappa>1$, 
the coefficient matrix $A$ is constructed as
\[
    A = U D V^\top,
\]
where $U \in \mathbb{R}^{m\times r}$ and $V \in \mathbb{R}^{n\times r}$ are orthonormal matrices obtained via
\[
    {\tt [U,\sim] = qr(randn(m,r),0),\qquad [V,\sim] = qr(randn(n,r),0)}
\]
in \texttt{MATLAB} and  $D \in \mathbb{R}^{r\times r}$ is diagonal with entries sampled uniformly from $(1,\kappa)$:
\[
    {\tt D = diag(1+(\kappa-1).*rand(r,1))}.
\]
This construction ensures that the condition number of $A$ does not exceed $\kappa$.

Table~\ref{table:rank_deficient} summarizes the numerical performance of REBK and the rRABEBK variants on the structured random matrices. All rRABEBK variants substantially outperform REBK, both in terms of iteration count and CPU time, highlighting the benefit of averaging over multiple hyperplanes and the acceleration afforded by appropriately chosen relaxation parameters.

Among the variants, rRABEBK-2.25 consistently achieves the best performance across all tested sizes, with the exception of the first example, achieving the lowest CPU times. For instance, for a matrix of size $2000 \times 4000$, rRABEBK-2.25 completes in only $101.691$ seconds, compared with $137.511$ seconds for rRABEBK-1.75, $216.630$ seconds for rRABEBK-1, and 
$2889.595$ seconds for REBK. This clearly demonstrates the effectiveness of both the
averaging scheme and a properly tuned relaxation factor in accelerating convergence.

\begin{table}[!htbp] 
\scriptsize 
\centering 
\caption{Numerical performance of REBK and rRABEBK variants on sparse least-squares problems with structured random matrices.}\label{table:rank_deficient}  
\resizebox{\textwidth}{!}{%
\begin{tabular}{|c|c|c|c|c|c|c|c|c|c|c|}
\hline
\multirow{2}{*}{$m\times n$} & \multirow{2}{*}{rank} & \multirow{2}{*}{$\kappa$} 
&\multicolumn{2}{c|}{REBK} & \multicolumn{2}{c|}{rRABEBK-1} & \multicolumn{2}{c|}{rRABEBK-1.75} & \multicolumn{2}{c|}{rRABEBK-2.25} \\ 
\cline{4-11} 
& & & IT & CPU & IT & CPU & IT & CPU & IT & CPU \\ 
\hline  
$1000\times 500$   & 480  & 10 & 92738 & 4.579   & 7351 & 1.583  & 4135 & 0.956 & 3107 & 1.207 \\  
$500\times 1000$   & 480  & 10 & 209428 & 11.286 & 16009 & 3.648 & 9035 & 2.824 & 7201 & 1.567 \\ 
$2000\times 1000$  & 900  & 5  & 84451 & 68.590  & 5378 & 7.369  & 3102 & 4.127 & 2548 & 3.113 \\  
$1000\times 2000$  & 900  & 5  & 130665 & 104.763 & 8658 & 12.197 & 5044 & 6.297 & 4074 & 4.691 \\   
$4000\times 2000$  & 1500 & 2  & 319759 & 1315.550 & 18151 & 83.782 & 11713 & 55.252 & 9977 & 47.970 \\   
$2000\times 4000$  & 1500 & 2  & 768752 & 2889.595 & 45431 & 216.630 & 28255 & 137.511 & 21502 & 101.691 \\   
\hline  
\end{tabular}
}
\end{table} 

To further illustrate the convergence behavior, Figure~\ref{fig:rank_deficient} 
plots the relative error versus iteration count for the same matrices. 
The figure clearly shows that all rRABEBK variants converge substantially faster than REBK,
with rRABEBK-2.25 consistently exhibiting the most rapid decay of the relative error.
These dynamic convergence curves confirm the tabulated results.

\begin{figure}[!htbp] 
\centering
	\subfigure[$m=1000,n=500,r=480,\kappa=10$]   
	{
		\begin{minipage}[t]{0.31\linewidth}
			\centering
			\includegraphics[width=1\textwidth]{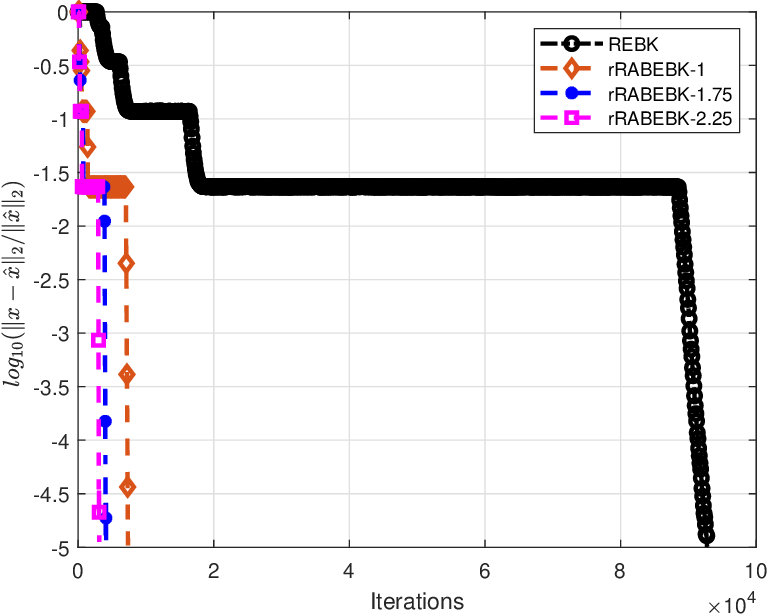}
		\end{minipage}
	}
	\subfigure[$m=2000,n=1000,r=900,\kappa=5$]  
	{
		\begin{minipage}[t]{0.31\linewidth}
			\centering
			\includegraphics[width=1\textwidth]{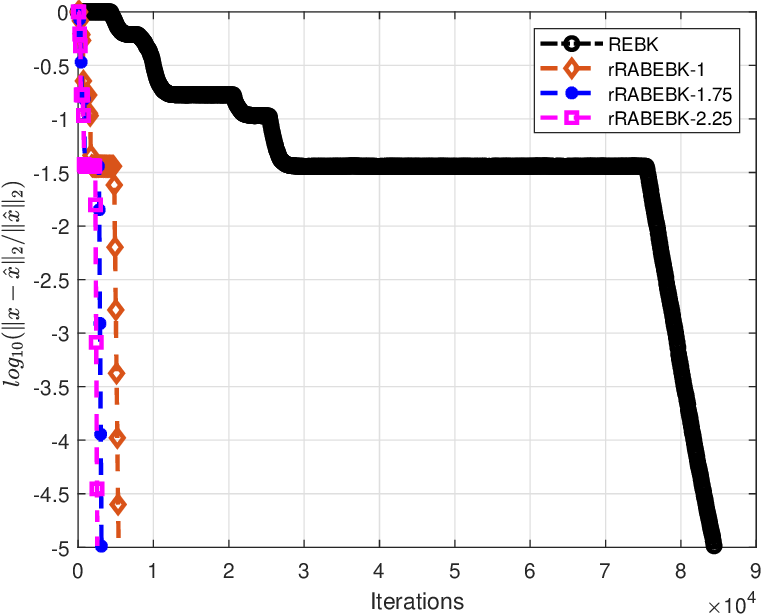}
		\end{minipage}
    }
    \subfigure[$m=4000,n=2000,r=1500,\kappa=2$]   
	{
		\begin{minipage}[t]{0.31\linewidth}
			\centering
			\includegraphics[width=1\textwidth]{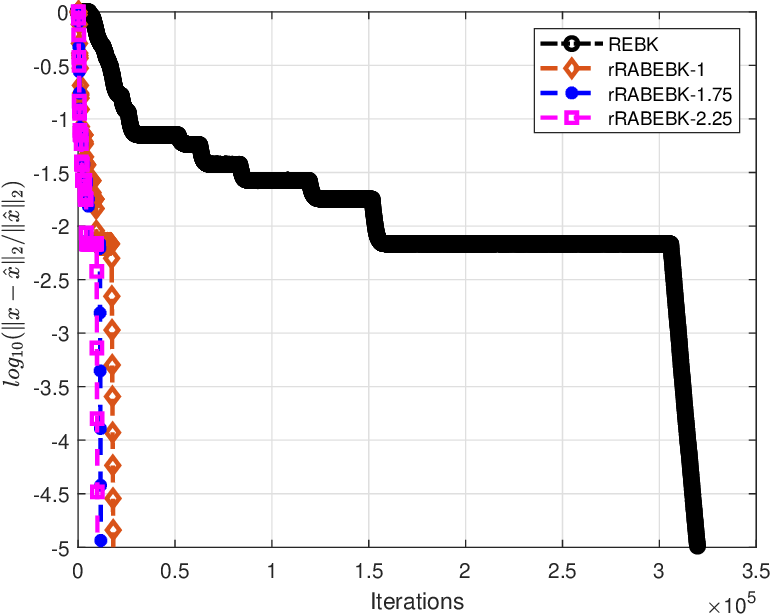}
		\end{minipage}
	}

	\subfigure[$m=500,n=1000,r=480,\kappa=10$]   
	{
		\begin{minipage}[t]{0.31\linewidth}
			\centering
			\includegraphics[width=1\textwidth]{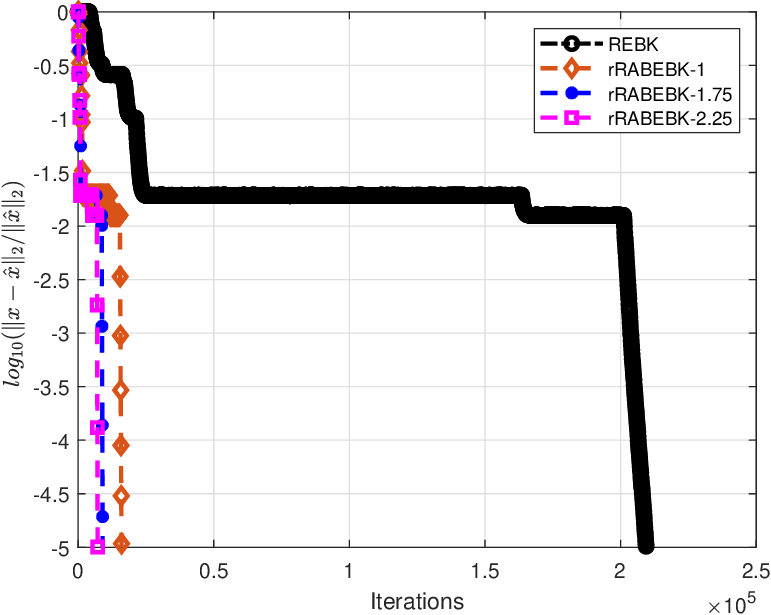}
		\end{minipage}
	}
	\subfigure[$m=1000,n=2000,r=900,\kappa=5$]  
	{
		\begin{minipage}[t]{0.31\linewidth}
			\centering
			\includegraphics[width=1\textwidth]{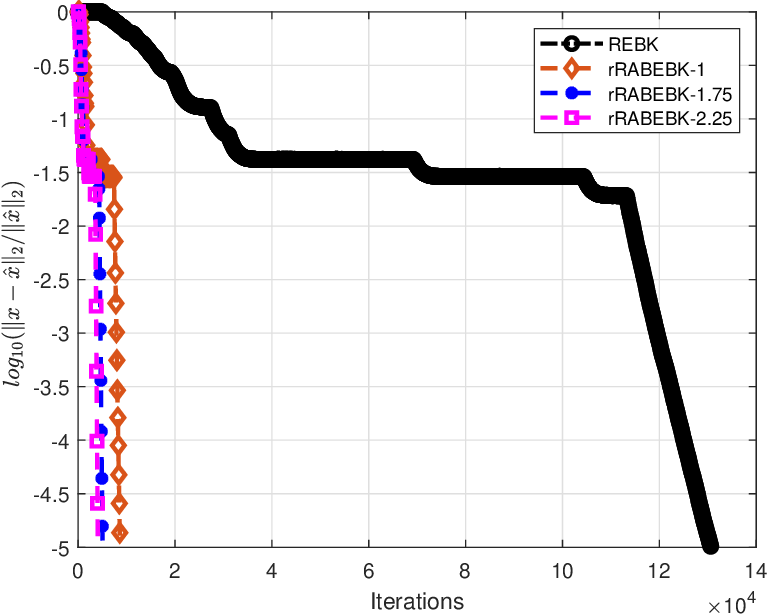}
		\end{minipage}
    }
    \subfigure[$m=2000,n=4000,r=1500,\kappa=2$]   
	{
		\begin{minipage}[t]{0.31\linewidth}
			\centering
			\includegraphics[width=1\textwidth]{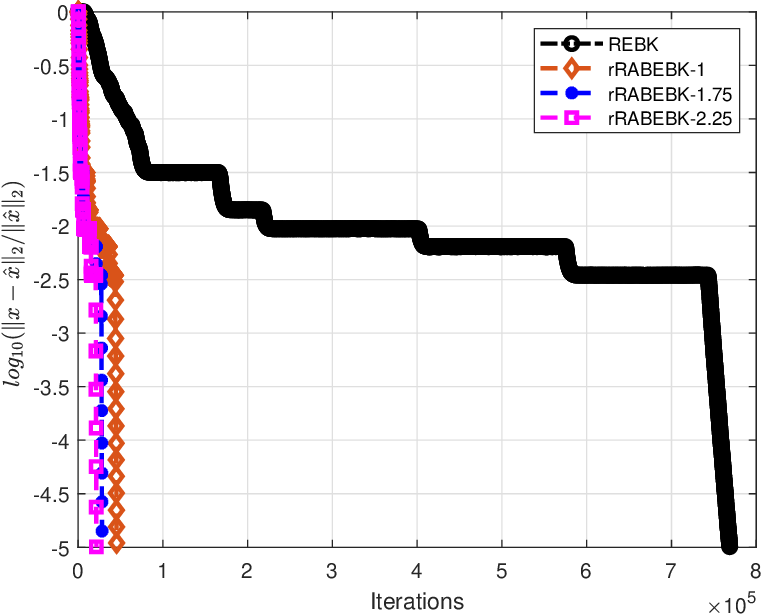}
		\end{minipage}
	}

\caption{The curves of relative error versus the number of iterations for overdetermined constructed matrices (top) and underdetermined constructed matrices (bottom).} 	
		\label{fig:rank_deficient}
\end{figure}

Overall, these experiments confirm that the proposed rRABEBK framework is robust and
highly efficient for structured random systems with controlled rank and condition number.

\subsection{Experiment on image recovery}
To further demonstrate the effectiveness of the proposed methods in practical scenarios, we consider an image recovery task based on the MNIST dataset, which is widely used in machine learning and computer vision. The image recovery of the MNIST dataset is available in the TensorFlow framework \cite{abadi2016tensorflow}. 

The coefficient matrix $A \in \mathbb{R}^{500 \times 784}$ is generated with i.i.d. Gaussian entries. The right-hand side vector is constructed as $$ b = A \hat{x} + e, $$ where $\hat{x} \in \mathbb{R}^{784}$ denotes the vectorized form of an MNIST image of size $28 \times 28$, and $e$ represents additive noise generated as in the previous numerical experiments.

The quality of the recovered image is evaluated using the peak signal-to-noise ratio (PSNR), defined as
\begin{equation}
\text{PSNR} := 10 \log_{10} \frac{ \sum_{i=1}^n x_i^2 }{ \sum_{i=1}^n (x_i - \hat{x}_i)^2 }.
\end{equation}

Figure \ref{fig:mnist_our_noise} presents the reconstruction results of REBK and the proposed rRABEBK variants after 10,000 iterations. It is evident that all rRABEBK variants achieve substantially higher-quality reconstructions than REBK. In particular, rRABEBK-2.25 attains the best reconstruction, with a PSNR of $33.095$, compared to $29.758$ for rRABEBK-1.75, $22.568$ for rRABEBK-1, and only $13.254$ for REBK.

These results confirm that the averaging block strategy, when combined with appropriately chosen relaxation parameters, substantially enhances the quality of image recovery. Moreover, the performance trends observed here are consistent with the convergence behaviour and efficiency improvements demonstrated in the previous synthetic experiments.

\begin{figure}[!htbp]
\centering
\begin{minipage}[c]{0.34\linewidth}
  \centering
  \subfigure[Exact phantom]{
      \includegraphics[width=0.82\linewidth]{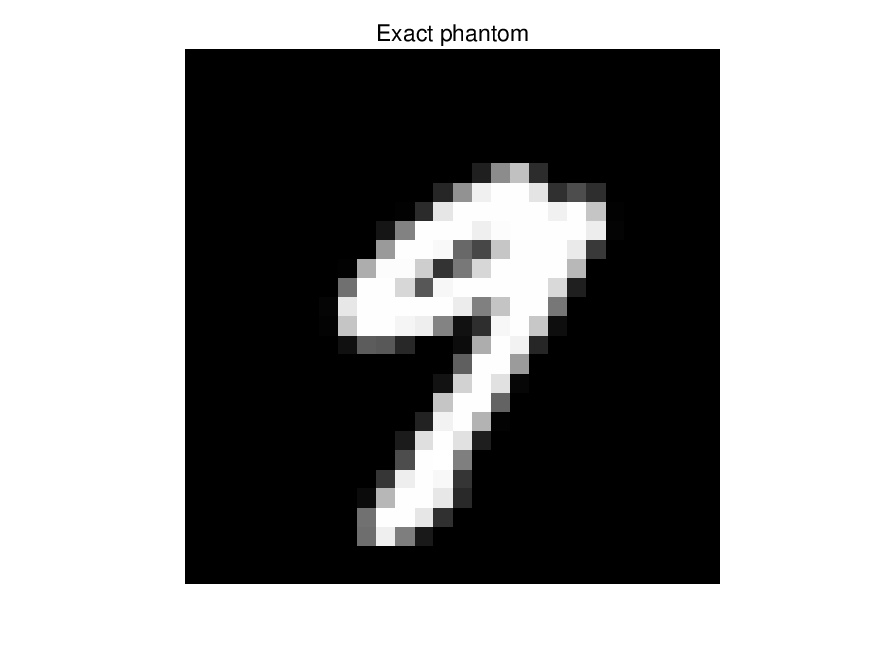}
  }
\end{minipage}%
\hfill
\begin{minipage}[c]{0.62\linewidth}
  \centering
  \subfigure[REBK, 13.254]{
      \includegraphics[width=0.45\linewidth]{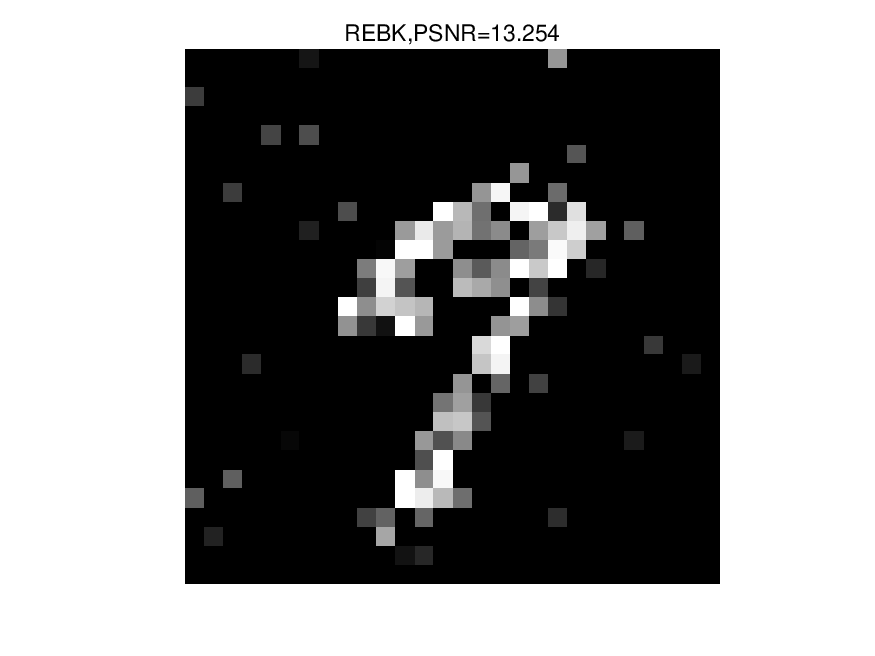}
  }
  \subfigure[rRABEBK-1, 22.568]{
      \includegraphics[width=0.45\linewidth]{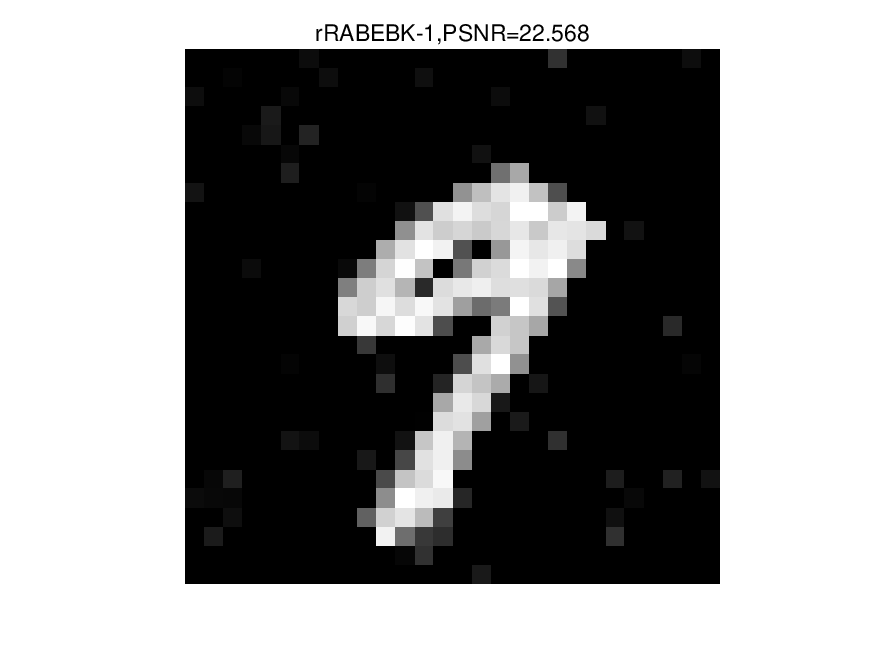}
  }\\[1ex]
  \subfigure[rRABEBK-1.75, 29.758]{
      \includegraphics[width=0.45\linewidth]{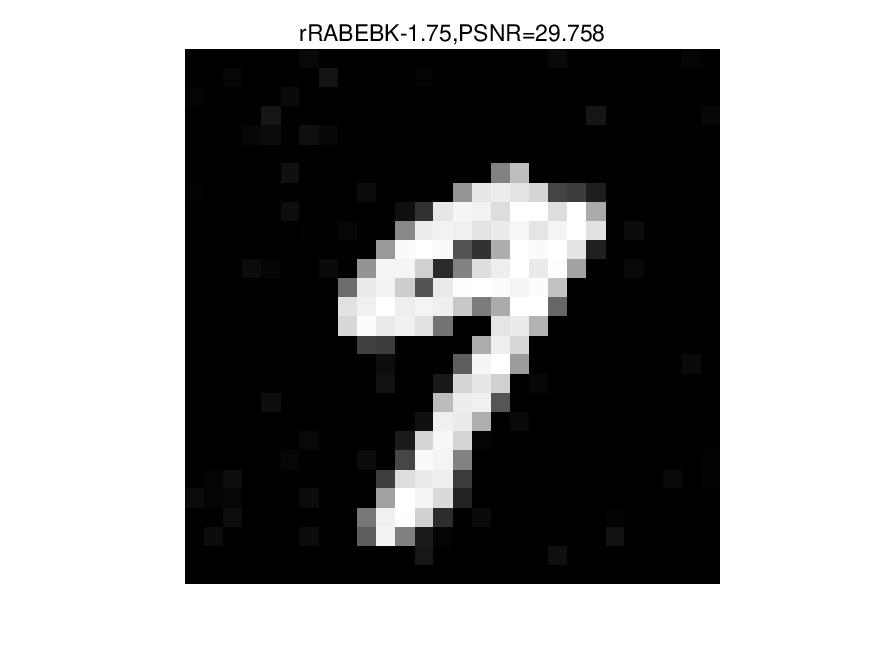}
  }
  \subfigure[rRABEBK-2.25, 33.095]{
      \includegraphics[width=0.45\linewidth]{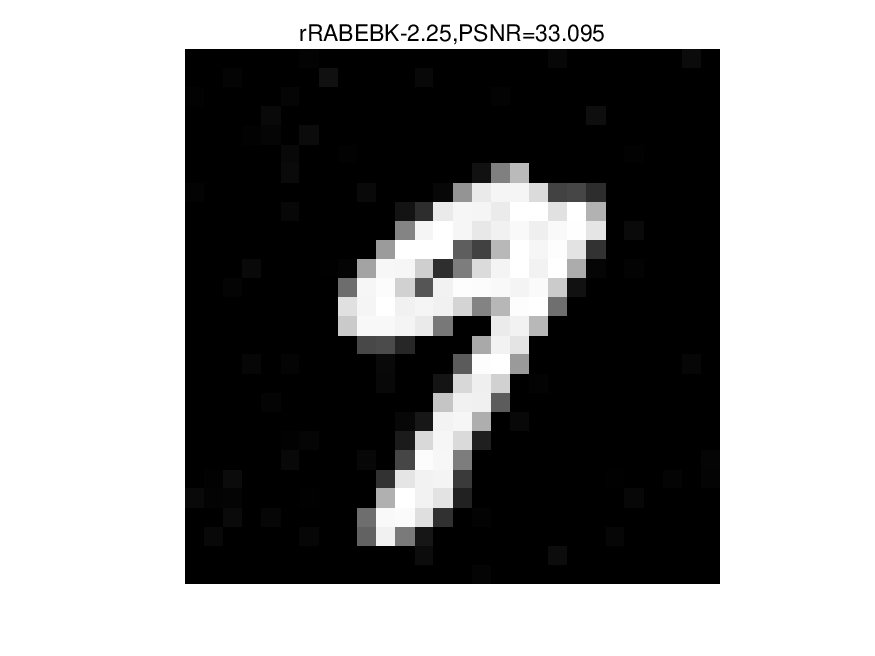}
  }
\end{minipage}
\caption{Original MNIST image (left) and reconstructed images obtained by REBK and rRABEBK variants after 10,000 iterations. The numbers below each reconstruction indicate the PSNR values (in dB).}
\label{fig:mnist_our_noise}
\end{figure}

\section{Conclusion} \label{sec:conclusion}
In this work, we have proposed a relaxed randomized averaging block extended Bregman-Kaczmarz (rRABEBK) framework for solving combined optimization problems. We have also investigated its convergence properties. By appropriately choosing the objective function $f$ and the data misfit function $g^*$, the proposed framework can be effectively applied to sparse least-squares problems.
Numerical experiments confirm the efficiency of the proposed methods, showing significant improvements over existing Kaczmarz-type algorithms. 

Future research directions include developing adaptive strategies for selecting the relaxation parameter, as well as explore broader choices of the functions $f$ and $g^*$, with the aim of extending the applicability of the framework to a wider range of inverse problems.

\bibliographystyle{siamplain}
\bibliography{ref_RABEBK}

@Article{chen2001atomic,
  author  = {S. S. Chen and D. L. Donoho and M. A. Saunders},
  title   = {Atomic decomposition by basis pursuit},
  journal = {SIAM Review},
  volume  = {43},
  number  = {1},
  pages   = {129--159},
  year    = {2001}
}

@Article{donoho2006compressed,
  author  = {D. L. Donoho},
  title   = {Compressed sensing},
  journal = {IEEE Transactions on Information Theory},
  volume  = {52},
  number  = {4},
  pages   = {1289--1306},
  year    = {2006}
}

@Article{cai2009convergence,
  author  = {J.-F. Cai and S. Osher and Z.-W. Shen},
  title   = {Convergence of the linearized {Bregman} iteration for $\ell_1$-norm minimization},
  journal = {Mathematics of Computation},
  volume  = {78},
  number  = {268},
  pages   = {2127--2136},
  year    = {2009}
}

@Article{cai2009linearized,
  author  = {J.-F. Cai and S. Osher and Z.-W. Shen},
  title   = {Linearized {Bregman} iterations for compressed sensing},
  journal = {Mathematics of Computation},
  volume  = {78},
  number  = {267},
  pages   = {1515--1536},
  year    = {2009}
}

@Article{yin2008Bregman,
  author  = {W.-T. Yin and S. Osher and D. Goldfarb and J. Darbon},
  title   = {{Bregman} iterative algorithms for $\ell_1$-minimization with applications to compressed sensing},
  journal = {SIAM Journal on Imaging Sciences},
  volume  = {1},
  number  = {1},
  pages   = {143--168},
  year    = {2008}
}

@Book{elad2010sparse,
  author    = {M. Elad},
  title     = {Sparse and redundant representations: from theory to applications in signal and image processing},
  publisher = {Springer Science \& Business Media},
  year      = {2010}
}

@Article{liang2020deep,
  author  = {D. Liang and J. Cheng and Z.-W. Ke and L. Ying},
  title   = {Deep magnetic resonance image reconstruction: Inverse problems meet neural networks},
  journal = {IEEE Signal Processing Magazine},
  volume  = {37},
  number  = {1},
  pages   = {141--151},
  year    = {2020}
}

@Article{adler2017solving,
  author  = {J. Adler and O. {\"O}ktem},
  title   = {Solving ill-posed inverse problems using iterative deep neural networks},
  journal = {Inverse Problems},
  volume  = {33},
  number  = {12},
  pages   = {124007},
  year    = {2017}
}

@Article{arridge2019solving,
  author  = {S. Arridge and P. Maass and O. {\"O}ktem and C. B. Sch{\"o}nlieb},
  title   = {Solving inverse problems using data-driven models},
  journal = {Acta Numerica},
  volume  = {28},
  pages   = {1--174},
  year    = {2019}
}

@Article{benning2018modern,
  author  = {M. Benning and M. Burger},
  title   = {Modern regularization methods for inverse problems},
  journal = {Acta Numerica},
  volume  = {27},
  pages   = {1--111},
  year    = {2018}
}

@Article{kaczmarz1937,
  author  = {S. Kaczmarz},
  title   = {Angen\"{a}herte Aufl\"{o}sung von Systemen linearer Gleichungen},
  journal = {Bulletin International de l'Acad\'{e}mie Polonaise des Sciences et des Lettres},
  volume  = {35},
  pages   = {335--357},
  year    = {1937}
}

@Article{strohmer2009randomized,
  author  = {T. Strohmer and R. Vershynin},
  title   = {A randomized {Kaczmarz} algorithm with exponential convergence},
  journal = {Journal of Fourier Analysis and Applications},
  volume  = {15},
  number  = {2},
  pages   = {262--278},
  year    = {2009}
}

@Article{zouzias2013randomized,
  author  = {A. Zouzias and N. M. Freris},
  title   = {Randomized extended {Kaczmarz} for solving least squares},
  journal = {SIAM Journal on Matrix Analysis and Applications},
  volume  = {34},
  number  = {2},
  pages   = {773--793},
  year    = {2013}
}

@Article{needell2015randomized,
  author  = {D. Needell and R. Zhao and A. Zouzias},
  title   = {Randomized block {Kaczmarz} method with projection for solving least squares},
  journal = {Linear Algebra and its Applications},
  volume  = {484},
  pages   = {322--343},
  year    = {2015}
}

@Article{du2020randomized,
  author  = {K. Du and W.-T. Si and X.-H. Sun},
  title   = {Randomized extended average block {Kaczmarz} for solving least squares},
  journal = {SIAM Journal on Scientific Computing},
  volume  = {42},
  number  = {6},
  pages   = {A3541--A3559},
  year    = {2020}
}

@Article{2019BW,
  author  = {Z.-Z. Bai and W.-T. Wu},
  title   = {On partially randomized extended {Kaczmarz} method for solving large sparse overdetermined inconsistent linear systems},
  journal = {Linear Algebra and Its Applications},
  volume  = {578},
  pages   = {225--250},
  year    = {2019}
}

@Article{2023BW,
  author  = {Z.-Z. Bai and L. Wang},
  title   = {On multi-step randomized extended {Kaczmarz} method for solving large sparse inconsistent linear systems},
  journal = {Applied Numerical Mathematics},
  volume  = {192},
  pages   = {197--213},
  year    = {2023}
}

@Article{2019D,
  author  = {K. Du},
  title   = {Tight upper bounds for the convergence of the randomized extended {Kaczmarz} and Gauss--Seidel algorithms},
  journal = {Numerical Linear Algebra with Applications},
  volume  = {26},
  number  = {3},
  pages   = {e2233},
  year    = {2019}
}

@InProceedings{tondji2023bregman,
  author    = {L. Tondji and D. A. Lorenz and I. Necoara},
  title     = {An accelerated randomized {Bregman}-{Kaczmarz} method for strongly convex linearly constraint optimization},
  booktitle = {Proceedings of the 2023 European Control Conference (ECC)},
  publisher = {IEEE},
  pages     = {1--6},
  year      = {2023}
}

@Article{schopfer2019linear,
  author  = {F. Sch{\"o}pfer and D. A. Lorenz},
  title   = {Linear convergence of the randomized sparse {Kaczmarz} method},
  journal = {Mathematical Programming},
  volume  = {173},
  pages   = {509--536},
  year    = {2019}
}

@InProceedings{lorenz2014sparse,
  author    = {D. A. Lorenz and S. Wenger and F. Sch{\"o}pfer and M. Magnor},
  title     = {A sparse {Kaczmarz} solver and a linearized {Bregman} method for online compressed sensing},
  booktitle = {Proceedings of the 2014 IEEE International Conference on Image Processing (ICIP)},
  pages     = {1347--1351},
  year      = {2014}
}

@Article{tondji2023adaptive,
  author  = {L. Tondji and I. Tondji and D. A. Lorenz},
  title   = {Adaptive {Bregman}--{Kaczmarz}: an approach to solve linear inverse problems with independent noise exactly},
  journal = {Inverse Problems},
  volume  = {40},
  number  = {9},
  pages   = {095006},
  year    = {2024}
}

@Article{tondji2023acceleration,
  author  = {L. Tondji and I. Necoara and D. A. Lorenz},
  title   = {Acceleration and restart for the randomized {Bregman}-{Kaczmarz} method},
  journal = {Linear Algebra and its Applications},
  volume  = {699},
  pages   = {508--538},
  year    = {2024}
}

@Article{lorenz2014linearized,
  author  = {D. A. Lorenz and F. Sch{\"o}pfer and S. Wenger},
  title   = {The linearized {Bregman} method via split feasibility problems: analysis and generalizations},
  journal = {SIAM Journal on Imaging Sciences},
  volume  = {7},
  number  = {2},
  pages   = {1237--1262},
  year    = {2014}
}

@Article{petra2015randomized,
  author  = {S. Petra},
  title   = {Randomized sparse block {Kaczmarz} as randomized dual block-coordinate descent},
  journal = {Analele {\c{S}}tiin{\c{t}}ifice ale Universit{\u{a}}{\c{t}}ii ``Ovidius" Constan{\c{t}}a. Seria Matematic{\u{a}}},
  volume  = {23},
  number  = {3},
  pages   = {129--149},
  year    = {2015}
}

@Article{tondji2023faster,
  author  = {L. Tondji and D. A. Lorenz},
  title   = {Faster randomized block sparse {Kaczmarz} by averaging},
  journal = {Numerical Algorithms},
  volume  = {93},
  number  = {4},
  pages   = {1417--1451},
  year    = {2023}
}

@Article{lorenz2023minimal,
  author  = {D. A. Lorenz and M. Winkler},
  title   = {Minimal error momentum {Bregman--Kaczmarz}},
  journal = {Linear Algebra and its Applications},
  volume  = {709},
  pages   = {416--448},
  year    = {2025}
}

@Article{Dong2025,
  author  = {Z. Dong and Z. Wang and G. Yin and J.-F. Yin},
  title   = {A surrogate hyperplane {Bregman--Kaczmarz} method for solving linear inverse problems},
  journal = {Journal of Scientific Computing},
  volume  = {102},
  number  = {1},
  pages   = {7},
  year    = {2025}
}

@Article{Dong2025RSHKO,
  author  = {Z. Dong and S. Qin and G. Yin and J.-F. Yin and N. Zheng},
  title   = {A surrogate hyperplane {Kaczmarz} method with oblique projection for solving linear systems},
  journal = {Journal of Scientific Computing},
  volume  = {104},
  number  = {3},
  pages   = {1--19},
  year    = {2025}
}

@Book{rockafellar1998variational,
  author    = {R. T. Rockafellar and R. J. B. Wets},
  title     = {Variational Analysis},
  publisher = {Springer},
  year      = {1998},
  note      = {MR1491362}
}

@Article{bregman1967relaxation,
  author  = {L. M. Bregman},
  title   = {The relaxation method of finding the common point of convex sets and its application to the solution of problems in convex programming},
  journal = {USSR Computational Mathematics and Mathematical Physics},
  volume  = {7},
  number  = {3},
  pages   = {200--217},
  year    = {1967}
}

@Article{Schpfer2022ExtendedRK,
  author  = {F. Sch{\"o}pfer and D. A. Lorenz and L. Tondji and M. Winkler},
  title   = {Extended randomized {Kaczmarz} method for sparse least squares and impulsive noise problems},
  journal = {Linear Algebra and Its Applications},
  volume  = {652},
  pages   = {132--154},
  year    = {2022}
}

@Article{necoara2019faster,
  author  = {I. Necoara},
  title   = {Faster randomized block {Kaczmarz} algorithms},
  journal = {SIAM Journal on Matrix Analysis and Applications},
  volume  = {40},
  number  = {4},
  pages   = {1425--1452},
  year    = {2019}
}

@InProceedings{abadi2016tensorflow,
  author    = {M. Abadi and P. Barham and J. Chen and Z. Chen and A. Davis and J. Dean and M. Devin and S. Ghemawat and G. Irving and M. Isard and others},
  title     = {TensorFlow: A system for large-scale machine learning},
  booktitle = {Proceedings of the 12th USENIX Symposium on Operating Systems Design and Implementation (OSDI)},
  pages     = {265--283},
  year      = {2016}
}

@Article{candes2006near,
  author  = {E. J. Cand\`{e}s and T. Tao},
  title   = {Near-optimal signal recovery from random projections: Universal encoding strategies?},
  journal = {IEEE Transactions on Information Theory},
  volume  = {52},
  number  = {12},
  pages   = {5406--5425},
  year    = {2006}
}

@Article{Xiao2025FastBlockNBK,
  author  = {A. Xiao and X. Gao and J.-F. Yin},
  title   = {A fast block nonlinear {Bregman}-{Kaczmarz} method with averaging for nonlinear sparse signal recovery},
  journal = {Journal of Scientific Computing},
  volume  = {104},
  number  = {2},
  pages   = {71},
  year    = {2025}
}
\end{document}